\documentclass[11pt,reqno]{amsart}
\usepackage[T1]{fontenc}
\usepackage[utf8]{inputenc}
\usepackage{lmodern}
\usepackage{amsmath,amssymb,mathtools}
\usepackage{microtype}
\usepackage[a4paper,textwidth=15.7cm,textheight=23.5cm,centering]{geometry}
\usepackage{enumitem}
\usepackage{xcolor}
\usepackage{tikz}
\usepackage{float}
\usepackage{hyperref}
\definecolor{linkink}{RGB}{34,58,82}
\hypersetup{colorlinks=true,linkcolor=linkink,citecolor=linkink,urlcolor=linkink,
 pdftitle={$2$-large sets are sets of nil-Bohr recurrence},
 pdfauthor={Ryan Alweiss and Sohail Farhangi},
 pdfsubject={An elementary special-generalized-polynomial obstruction to restricted arithmetic progressions}}
\allowdisplaybreaks[2]
\setlist[enumerate]{label=\textup{(\roman*)},leftmargin=*,itemsep=3pt}
\newtheorem{theorem}{Theorem}[section]
\newtheorem{proposition}[theorem]{Proposition}
\newtheorem{lemma}[theorem]{Lemma}
\newtheorem{corollary}[theorem]{Corollary}
\newtheorem{example}[theorem]{Example}
\newtheorem*{bglconjecture}{Conjecture (Brown--Graham--Landman)}
\theoremstyle{definition}
\newtheorem{definition}[theorem]{Definition}
\newtheorem{question}[theorem]{Question}
\theoremstyle{remark}
\newtheorem{remark}[theorem]{Remark}
\numberwithin{equation}{section}
\newcommand{\N}{\mathbb N}
\newcommand{\Z}{\mathbb Z}
\newcommand{\R}{\mathbb R}

\newcommand{\eps}{\varepsilon}
\newcommand{\ud}{\overline d}
\newcommand{\Hpol}{\mathcal H}
\newcommand{\Kpol}{\mathcal K}
\newcommand{\nint}[1]{\langle #1\rangle}
\newcommand{\tnorm}[1]{\lVert #1\rVert_{\R/\Z}}
\newcommand{\coord}[1]{\lvert #1\rvert_{\infty}}
\newcommand{\AP}[3]{\{#1+j#2:0\leq j<#3\}}

\DeclareMathOperator{\UT}{UT}
\DeclareMathOperator{\osc}{osc}
\DeclareMathOperator{\rep}{rep}
\DeclareMathOperator{\lcm}{lcm}

\newcommand{\rotatedhash}{%
  \ooalign{%
    \hfil\rotatebox[origin=c]{-45}{$\scriptscriptstyle\Vert$}\hfil\cr
    \hfil\rotatebox[origin=c]{ 45}{$\scriptscriptstyle\Vert$}\hfil\cr
  }%
}
\newcommand{\dhash}{\mathrm{d}^{\rotatedhash}}
\newcommand{\cpart}[1]{\left\{#1\right\}_{\mathrm c}}
\title[$2$-large sets are sets of nil-Bohr recurrence]{$2$-large sets are sets of nil-Bohr recurrence}
\author{Ryan Alweiss}
\thanks{R.A.: University of Cambridge and Trinity College, Cambridge, United Kingdom.
S.F.: Beijing Institute of Mathematical Sciences and Applications, Beijing, China.}
\address{Department of Pure Mathematics and Mathematical Statistics,
University of Cambridge, Centre for Mathematical Sciences,
Wilberforce Road, Cambridge CB3 0WB, United Kingdom;
and Trinity College, Cambridge CB2 1TQ, United Kingdom}
\email{ra699@cam.ac.uk}
\author{Sohail Farhangi}
\address{Beijing Institute of Mathematical Sciences and Applications,
No. 544, Hefangkou Village, Huaibei Town, Huairou District,
Beijing 101408, China}
\email{sohail.farhangi@gmail.com}
\date{}
\subjclass[2020]{Primary 05D10; Secondary 37B20, 11J54, 11B25}
\keywords{restricted arithmetic progressions, van der Waerden largeness,
Szemer\'edi largeness, nilrecurrence, generalized polynomials}

\begin{document}
\begin{abstract}
A set $S$ of positive integers is $2$-large if every two-coloring contains arbitrarily long monochromatic arithmetic progressions with common difference in $S$.
We prove that every such set meets every nil-Bohr neighborhood of $0$. This proves a conjecture of the first author and gives a sharp answer to a question of Host, Kra, and Maass. 

We also show that, for every fixed nil-Bohr neighborhood \(B\) of \(0\) and every \(\eps>0\), restricting the common difference to \(B\) increases the extremal number for \(k\)-term arithmetic progressions by at most a factor of \(1+\eps\), provided \(k\) is sufficiently large.

\end{abstract}
\maketitle

\section{Introduction}
\label{sec:intro}

One of the oldest results in Ramsey theory is van der Waerden's theorem \cite{vdW}, which states that every finite coloring of $\N$ contains arbitrarily long monochromatic arithmetic progressions (APs).
In the 1990s, Brown, Graham, and Landman \cite{BGL} began the systematic study of the version of this theorem with restricted differences.

\begin{definition}
We say $S \subset \N$ is \emph{large} if any finite coloring of $\N$ has arbitrarily long monochromatic arithmetic progressions with common difference in $S$.
\end{definition}

Note van der Waerden's theorem is just the statement that $\N$ is large. Brown, Graham, and Landman give some examples of large sets $S$ in their paper, such as the image of a polynomial $P:\N\rightarrow\N$ with integer coefficients and $P(0)=0$, and also give several negative examples. They also defined a version of largeness with a fixed number of colors.

\begin{definition}
We say $S \subset \N$ is \emph{$r$-large} if any $r$-coloring of $\N$ has arbitrarily long monochromatic arithmetic progressions with common difference in $S$.
\end{definition}

Towards improving our understanding of which sets $S$ are large, Brown, Graham, and Landman posed several conjectures and questions, the most prominent of which is the following.

\begin{bglconjecture}[\cite{BGL}]
Every $2$-large set is large.
\end{bglconjecture}

It is more helpful to think about this conjecture in the contrapositive.
If there is a finite coloring so that there are no arbitrarily long monochromatic APs with common difference in $S$, then it is natural to ask if we can use only $2$ colors to block such APs.

One reason for the difficulty of this conjecture is that until recently we had few examples of non-large sets.
The first natural class of non-large sets consists of complements of Bohr neighborhoods of $0$ and their subsets.
This class contains many of the sets considered by Brown, Graham, and Landman. Recall $||x||_{\R/\Z}=\min (\{x\}, 1-\{x\})$ is the distance from $x$ to the nearest integer.

\begin{definition}\label{def:Bohr}
    A set $B \subset \N$ is a \emph{Bohr neighborhood of $0$} if there exist $\alpha_1, \cdots, \alpha_k, \eps \in \R^{+}$ so that $B = \{n \in \mathbb{N}\ |\ ||n\alpha_i||_{\mathbb{R}/\mathbb{Z}} < \epsilon\ \forall\ 1 \le i \le k\}$.
    If $S \subseteq \mathbb{N}$ intersects every Bohr neighborhood of $0$, then $S$ is a \emph{set of Bohr recurrence}.
\end{definition}

Suppose that \(S\cap B=\emptyset\), where \(B\) is the Bohr neighborhood in Definition~\ref{def:Bohr}.
If we color $n$ by the image of $(n \alpha_1, \cdots, n\alpha_k) \in (\R/\Z)^k$ up to a small $\eps/10$ error in the $\ell^{\infty}$ norm, there are no two elements of the same color whose difference is in $S$, so certainly no long arithmetic progressions with difference in $S$.
We also mention here the celebrated \emph{Katznelson's Question}, which asks whether a set $S$ so that there exists a finite coloring of $\N$ with no two elements of the same color having a difference in $S$ must be of this form, or equivalently whether a set $S$ which is not topologically recurrent can be Bohr recurrent.
See the introduction of \cite{AlweissObstacles} for more on Katznelson's Question.

The second author and Grytczuk \cite{FarhangiGrytczuk} first raised the question of whether a $2$-large set $S$ is necessarily a set of Bohr Recurrence.
This question was resolved in one dimension by Host, Kra, and Maass \cite[Lemma 7.4]{HKM}, and a quantitative variation of this result was recorded as \cite[Theorem 2]{FarhangiGrytczuk}.

\begin{theorem}[\cite{HKM}, \cite{FarhangiGrytczuk}]
    If $S$ is $2$-large, then there do not exist $\alpha, \eps \in \R^{+}$ so that $||s \alpha||_{\R/\Z}>\eps$ for all $s \in S$.
\end{theorem}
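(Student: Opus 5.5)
We argue by contraposition: given $\alpha,\eps>0$ with $\tnorm{s\alpha}>\eps$ for all $s\in S$, we build a $2$-coloring of $\N$ with no monochromatic $k$-term AP with difference in $S$, for some $k$ depending only on $\eps$. If $\alpha$ is rational with denominator $q$, the hypothesis forces $q\nmid s$ for every $s\in S$. So it suffices to treat irrational $\alpha$: replace a rational $\alpha=p/q$ by $\alpha'=1/q+\beta$ with $\beta$ tiny and irrational. This also lets us use equidistribution freely.

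The natural coloring is $\chi(n)=\mathbf 1\{\{n\alpha\}\in[0,1/2)\}$, or more flexibly $\chi(n)=\mathbf 1\{\{n\alpha\}\in U\}$ for a union $U$ of arcs of length comparable to $\eps$. Take an AP $n,n+s,\dots,n+(k-1)s$ with $s\in S$, and write $\theta=s\alpha \bmod 1$, so that $\tnorm{\theta}>\eps$. The orbit $x,x+\theta,\dots,x+(k-1)\theta$ on the circle must then avoid being contained in $U$ and avoid being contained in its complement $U^c$.

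The key step is to choose $U$ so that every $\theta$ with $\tnorm{\theta}>\eps$ has a $k$-step orbit that meets both $U$ and $U^c$, from every starting point $x$. The half-circle choice fails: for $\theta$ near $1/2$, the orbit alternates between two nearly antipodal points, which may both lie in one half. So we instead let $U$ consist of alternating arcs of length $\delta$, where $\delta\ll\eps$ and $1/\delta$ is a large even integer, say $\delta=1/(2M)$. Then any set of circle points that is $\delta$-dense must meet both $U$ and $U^c$.

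It remains to show that the orbit $\{x+j\theta\}_{j<k}$ is $\delta$-dense whenever $\theta$ is far from rationals with small denominator. The obstacle is $\theta$ close to $a/b$ with $b$ small but $b\ge 2$, such as $\theta\approx 1/2$ or $1/3$, which the hypothesis $\tnorm{\theta}>\eps$ does not exclude. For these, the orbit clusters near the $b$ points $x+a j/b$. It then drifts slowly, at rate $\tnorm{b\theta}$, and if $b\theta$ is nearly an integer it may stay in a fixed $U$-arc for all $k$ steps.

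To handle this I would not use a single fixed $U$. Instead I would build a hierarchical coloring by Dirichlet approximation. For each $b\le Q$ (with $Q$ depending on $\eps$), handle the differences $s$ with $\tnorm{s b\alpha}$ tiny using the coloring by $\{n\alpha\}$ restricted to residue classes or scales. More concretely, choose $U$ depending on $\eps$ so that its indicator has no rotation invariance by $a/b$ for any $b\le 1/\eps$. For example, make the arc pattern aperiodic, such as a single arc $[0,\eps/2)$ together with a complementary structure, so that the $b$ cluster points $x+aj/b$ can never all lie in $U$, nor all in $U^c$, for every $x$.

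Compactness then finishes the argument. Consider the function $F(x,\theta)=$ whether $\{x+j\theta\}_{j<k}$ is monochromatic. This is a closed condition in $(x,\theta)$, after a small thickening of the arc boundaries. So it suffices to check that no $\theta$ with $\tnorm{\theta}\ge\eps$ admits a monochromatic infinite orbit. For irrational $\theta$ this holds by density. For rational $\theta=a/b$ it holds because $U$ was chosen so that no coset of $\frac1b\Z/\Z$ lies inside $U$ or inside $U^c$. Such a $U$ exists by taking two arcs of length $\eps/3$ placed at incommensurable positions, with $U^c$ split accordingly.

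Compactness then yields a uniform $k$. Since $S$ is $2$-large, arbitrarily long monochromatic APs with difference in $S$ exist, and this contradicts the uniform bound. The main obstacle is making the compactness step rigorous near the arc boundaries. I expect to handle it by first fixing a finite $Q$ so that every $\theta$ with $\tnorm{\theta}\ge\eps$ is within $1/(kQ)$ of some $a/b$ with $b\le Q$, and then treating the irrational-like $\theta$ and the near-rational $\theta$ separately.
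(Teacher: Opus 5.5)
There is a genuine gap, and it starts where you discard the half-circle coloring $\chi(n)=\mathbf 1\{\{n\alpha\}\in[0,1/2)\}$. That coloring is exactly the paper's proof. It works, for rational and irrational $\alpha$ alike, once $k\geq 1/(2\eps)+1$. Put $\theta=\{s\alpha\}\in(\eps,1-\eps)$ and suppose $\theta\leq1/2$. If $y\in[0,1/2)$, then $y+\theta<1$, so $y+\theta$ either stays in $[0,1/2)$, having advanced by $\theta$, or lands in $[1/2,1)$. If $y\in[1/2,1)$, then $y+\theta$ either stays in $[1/2,1)$ or wraps into $[0,\theta)\subseteq[0,1/2)$. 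The case $\theta>1/2$ is the mirror image, with backward steps of length $1-\theta$. So a step of length $\tnorm{\theta}\leq1/2$ cannot jump over a half-open semicircle. A monochromatic run of $n$ terms therefore moves monotonically inside an arc of length $1/2$ by steps longer than $\eps$, which forces $(n-1)\eps<1/2$. Your worry about $\theta$ near $1/2$ is unfounded. If $\theta=1/2-\beta$ and $y,y+\theta\in[0,1/2)$, then $y<\beta$, so $y+2\theta=y+1-2\beta\in[1/2,1)$. The two ``nearly antipodal'' clusters drift, and runs there have length at most $2$.

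The replacement you build does not work. Your compactness step ranges over all $\theta$ with $\tnorm{\theta}\geq\eps$, including $\theta=1/2$. For that $\theta$, your requirement that no coset of $\frac12\Z/\Z$ be monochromatic forces $U^c=U+1/2$ exactly. Two arcs of length $\eps/3$ violate this, since most antipodal pairs lie in $U^c$. More fundamentally, thickening the boundaries cannot rescue the compactness argument. For any $U$ with $U^c=U+1/2$, the boundary $\partial U$ is invariant under $x\mapsto x+1/2$. So for $p\in\partial U$ the infinite orbit $\{p,p+1/2\}$ lies in $\overline U$; for the semicircle, $\{0,1/2\}\subseteq[0,1/2]$. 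The decreasing family of closed sets you intersect therefore has nonempty intersection, and no uniform $k$ follows. Uniformity has to come from the exact half-open boundary behavior, as in the monotonicity argument above. Finally, your reduction to irrational $\alpha$ via $\alpha'=1/q+\beta$ is invalid. The perturbation $s\beta$ is unbounded on the infinite set $S$, so $\tnorm{s\alpha'}$ need not stay bounded away from $0$ there. The reduction is also unnecessary.
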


The proof is simple.
If this were the case, then one could just color $s$ red if $0 \le \{s \alpha \}<1/2$ and blue if $1/2 \le \{s \alpha \}<1$. In other words, one can pull back the two-coloring of $\R/\Z$ where the interval $[0,1/2)$ is red and $[1/2,1)$ is blue through the map $n \to \{\alpha n\}$.

Recently, the first author settled this question in full generality.

\begin{theorem}[\cite{AlweissBohr}]\label{thm:bohr}
    If $S$ is $2$-large, then $S$ is a set of Bohr recurrence.
\end{theorem}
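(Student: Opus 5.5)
We argue by contrapositive, following the one-dimensional case. Suppose $S$ misses the Bohr neighborhood $B=\{n: \tnorm{n\alpha_i}<\eps \ \forall i\le k\}$. We will build a two-coloring of $\N$, depending only on the point $x_n=(n\alpha_1,\dots,n\alpha_k)\in\mathbb{T}^k$, with no monochromatic $L$-term AP whose difference lies in $S$, for some $L$ depending on $k$ and $\eps$. Every $s\in S$ satisfies $\coord{s\alpha}\ge\eps$ in the sup-norm on $\mathbb{T}^k$; that is, some coordinate $\tnorm{s\alpha_i}\ge\eps$, but which coordinate may vary with $s$. This is the difficulty: the naive colorings $c_i(n)=\mathbf 1[\{n\alpha_i\}<1/2]$ each handle only those $s$ whose $i$-th coordinate is large. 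A product coloring would handle all $s$, but it uses $2^k$ colors.

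\textbf{Step 1 (reduction of dimension).} First replace the $\alpha_i$ by a rationally independent family together with a rational part, via the closure of $\{n\alpha\}$ (a subtorus times a finite group). The finite rational part can be absorbed by passing to residue classes: an AP of length $qL$ with difference $s$ contains an AP of length $L$ with difference $qs$ inside a single class. We can then assume the orbit is equidistributed in $\mathbb{T}^k$, at the cost of shrinking $\eps$.

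\textbf{Step 2 (a single "height function").} The key idea is to find a function $f:\mathbb{T}^k\to\R/\Z$ with two properties. First, for every $v$ with $\coord{v}\ge\eps$, the orbit $x, x+v, x+2v,\dots$ should be forced to move $f$ substantially within $L$ steps. Second, $f$ should be piecewise nice, so that the coloring $\mathbf 1[\{f(x_n)\}<1/2]$ is well defined. A linear $f$ fails, since $v$ may lie in its kernel. Instead, use a two-coloring defined by a partition of $\mathbb{T}^k$ into two measurable sets $R,B'$ with this property: every "long segment" $\{x+jv:0\le j<L\}$ with $\coord{v}\ge\eps$ meets both sets. One construction is to take $R$ to be a union of thin slabs in many directions, or a "checkerboard at scale $\eps$". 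I would try the following. Fix a fine grid at scale $\delta\ll\eps$ and choose the coloring of the grid cells at random, or via a van der Waerden–type finite lemma. For every direction $v$ with $\coord{v}\ge\eps$ and every starting point $x$, the $L$ points $x+jv$ then land in cells that are not all the same color. A union bound works if the number of combinatorially distinct segments, up to $\delta$-perturbation, is at most $(C/\delta)^{2k}$ while each is monochromatic with probability $2^{1-L}$. Taking $L\gg k\log(1/\delta)$ then works.

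\textbf{Step 3 (discretization and boundary issues).} The segment points must genuinely sit in distinct cells, and perturbations must not change cell membership. Since $\coord{jv}$ can be small modulo $1$ for some $j$, repeated points may occur. To handle this, require only that the segment meet both colors robustly: color by thickened cells, and use the fact that among $j<L$ many of the points $x+jv$ are $\delta$-separated. This uses equidistribution or a pigeonhole on the multiples $jv$ with $\coord{v}\ge\eps$. I expect this step to be the main obstacle. If the orbit of $v$ is nearly periodic with a short period, the segment visits only a few cells repeatedly, and the probabilistic count must be redone. In that case, pass to the period $p$ (which satisfies $\coord{pv}$ small) and use the fact that $v$ itself is $\eps$-far from $0$, so the $p$ visited cells are $\gtrsim\eps$-separated. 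Since the coloring is random at scale $\delta<\eps$, these cells receive independent colors, and with $p\ge2$ forced the number of such near-periodic configurations stays small. Making this union bound uniform over all $v$ and $x$ is the crux. Once it holds, pulling the coloring back via $n\mapsto x_n$ gives a two-coloring with no monochromatic $L$-AP with difference in $S$, contradicting $2$-largeness.
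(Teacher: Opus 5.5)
There is a genuine gap, and it is more serious than the union-bound bookkeeping you defer to Step~3. Take $k\ge2$ and $\eps<1/2$ (smaller $\eps$ is the harder case anyway). Then the target of Step~2, as you state it for all $x$ and all $v$ of torus sup-norm at least $\eps$, fails for \emph{every} partition of $\mathbb{T}^k$. Let $v_1=(1/2,0)$, $v_2=(0,1/2)$ and $v_3=v_1+v_2$. Each segment $\{x+jv_i\}$ has only two points, so $x$, $x+v_1$ and $x+v_3$ would have to be pairwise differently colored, which is impossible with two colors.

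Restricting to orbit points does not save a cell coloring. Suppose $1,\alpha_1,\alpha_2$ are rationally independent, which is the case your Step~1 reduces to. Choose $x_0$ with $x_0$, $x_0+v_1$, $x_0+v_3$ at distance more than $\rho$ from all cell boundaries. Choose $n$ with $n\alpha$ within $\rho/4$ of $x_0$, and $s_i\in\N$ with $s_i\alpha$ within $\rho/(4L)$ of $v_i$; these $s_i$ lie outside $B$. Then each of the progressions $n+js_1$, $(n+s_1)+js_2$ and $n+js_3$, for $0\le j<L$, alternates between the cells of two of the points $x_0,x_0+v_1,x_0+v_3$. So any grid coloring of $\mathbb{T}^2$, random or not and at any scale $\delta$, has a monochromatic $L$-term progression with difference outside $B$.

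Period-$2$ configurations are therefore not a sparse exceptional family that a union bound can absorb. Each one is monochromatic with probability $1/2$, and together they are inconsistent. Step~1 is also flawed: replacing $s$ by $qs$ can move the difference into $B$ (e.g.\ $\alpha_1=1/2$, $s$ odd, $q=2$). This destroys the hypothesis exactly when the rational part was its witness.

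The missing idea is to color a finite cover instead of $(\R/\Z)^k$. The paper pulls back along $n\mapsto n\alpha/N \bmod \Z^k$ with $N=(M^k)!$ (the paper calls this $L$). Pigeonhole over $M^k$ boxes of side $1/M$ gives $c\le M^k$, so $c\mid N$, together with a representative $v$ of $cs\alpha/N$ whose coordinates are at most $1/M$. Writing $cs\alpha_i/N=z+\eta$ gives $s\alpha_i=zN/c+N\eta/c$ with $zN/c\in\Z$. Hence $|\eta|\ge c\eps/N$ in a coordinate where $\tnorm{s\alpha_i}\ge\eps$.

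So the subprogression with step $cs$ lies on a genuine straight path of length about $L\eps/N\gg1$, sampled with spacing at most $k/M$. A coloring at an intermediate scale must then show both colors along it; the paper uses the parity of $\lfloor K\sum x_i^2\rfloor$ with $M=K^2$, and random cells as in \cite{AlweissBohr} also work. Lifting is exactly what removes the odd cycle above: the step-$cs$ walk now drifts along a line instead of bouncing between two cells. It also makes your rational-independence reduction unnecessary.
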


The next natural class of non-large sets consists of those which are not sets of nil-Bohr recurrence.
For \(x\in\R\), let \(\nint{x}:=\lfloor x+1/2\rfloor\) denote its nearest integer, with ties resolved toward the larger integer, and let $\cpart{x}:=x-\nint{x}\in[-1/2,1/2)$ denote its centered fractional part.
Note that \(|\cpart{x}|=\tnorm{x}\).

Define recursively
\[
 \mathsf L(x_1)=x_1,\qquad
 \mathsf L(x_1,\ldots,x_s)=x_1\nint{\mathsf L(x_2,\ldots,x_s)}.
\]
Following \cite[Definition~4.1.2]{HSY}, we define a \emph{special generalized polynomial} to be an expression
\begin{equation}\label{eq:sgp-definition}
 P(n)=\mathsf L(\alpha_1n^{w_1},\ldots,\alpha_sn^{w_s}),
 \qquad \alpha_i\in\R,\quad w_i\in\N.
\end{equation}

We define the degree of $P$ by $\text{deg}(P)=w_1+\cdots+w_s$, and the \emph{depth} of $P$ by $d(P) = s$.
For example, $\alpha n$ has degree $1$ and $\beta n^2$ and
$\gamma n\nint{\theta n}$ have degree $2$.

\begin{definition}\label{def:NilBohr}
    The set $S \subset \N$ is a \emph{nil-Bohr neighborhood}\footnote{This gives the same notion of nil-Bohr recurrence as \cite{HSY}, by Theorems B and 4.2.11.} \emph{of $0$} if there exist special generalized polynomials $P_1, \cdots, P_k$ and an $\eps > 0$ for which $S = \{n \in \mathbb{N}\ |\ ||P_i(n)||_{\R/\Z}<\eps\ \forall\ 1 \le i \le k\}$.
    If $S$ intersects every nil-Bohr neighborhood of $0$, then $S$ is a set of \emph{nil-Bohr recurrence}.
\end{definition}

In the language of ergodic theory, sets of nil-Bohr recurrence are sets of pointwise recurrence for nilrotations on nilmanifolds, whereas sets of Bohr recurrence are sets of pointwise recurrence for finite-dimensional tori.

Huang, Shao, and Ye \cite[Theorem A(1)]{HSY} showed that large sets are sets of nil-Bohr recurrence.
Our main result states that if a set is not large because of a nil-Bohr obstacle, then it is not $2$-large either.

\begin{theorem}\label{thm:2LargeIsNilrecurrent}
    If $S$ is $2$-large, then $S$ is a set of nil-Bohr recurrence.
\end{theorem}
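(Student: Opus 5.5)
We argue by contraposition. Suppose $S$ misses a nil-Bohr neighborhood $B=\{n:\tnorm{P_i(n)}<\eps\ \forall i\le k\}$. We will build an explicit two-coloring of $\N$ in which, for some $K$, no $K$-term progression $\{a+js:0\le j<K\}$ with $s\in S$ is monochromatic. The guiding model is the Host--Kra--Maass argument: pull back the red/blue half-circle coloring along $n\mapsto\{\alpha n\}$. Along a progression $a+js$ the value $(a+js)\alpha$ moves by the step $s\alpha$, whose size is at least $\eps$ in $\R/\Z$. So after boundedly many steps the progression must cross from one half-circle to the other.

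For a special generalized polynomial $P$ of degree $>1$, the map $j\mapsto P(a+js)$ is no longer linear. We induct on degree and depth. The key algebraic step is a ``discrete Taylor expansion'': $P(a+js)$ agrees, up to errors of size $O(\delta)$ modulo $1$, with a generalized polynomial in $j$. Its top coefficient is essentially $P(s)$ plus terms controlled by lower-degree special generalized polynomials evaluated at $s$ and $a$. This holds on the set of $j$ where the relevant inner quantities $\cpart{\cdot}$ avoid the rounding discontinuities.

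Concretely, for $P=\mathsf L(\alpha_1n^{w_1},\dots)$ we expand $\alpha_1(a+js)^{w_1}\nint{Q(a+js)}$. We write $\nint{Q(a+js)}=Q(a+js)-\cpart{Q(a+js)}$ and expand $Q$ recursively. This reduces to the case where all lower-depth quantities $\cpart{Q(\cdot)}$ at $s$ are small. If they are not small, then $s$ lies outside a lower-complexity nil-Bohr neighborhood, and the induction hypothesis applies. So we organize $B$ as the intersection of a finite family closed under taking ``lower terms'', and choose the coloring by a lexicographic or product rule. Color $n$ by the parity-type pattern (half-circle membership) of a finite list of generalized polynomials $R_1(n),\dots,R_m(n)$. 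To get only two colors, we combine them, for example through a single nil-sequence $\phi(g^n x)$ with $\phi$ a $\{0,1\}$-valued function. Equivalently, we take a two-coloring of a nilmanifold $G/\Gamma$ and pull it back along a polynomial orbit.

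The main obstacle is the two-color requirement. With many colors one could simply record every $\cpart{R_i(n)}$ coarsely. With two colors we must show that a single red/blue partition of the nilmanifold is already non-monochromatic on every long progression $j\mapsto g^{a+js}x$ with $s\notin B$.

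For this step we would use the structure of such orbits. Fix $s\notin B$. The sequence $j\mapsto g^{js}(g^ax)$ is a polynomial orbit whose ``top-step'' displacement is bounded away from the identity. The bound is uniform in $s$, and it comes from $\tnorm{P_i(s)}\ge\eps$ for some $i$. Mimicking the circle case, we would choose the partition so that each piece is a product of a half-torus condition in the top vertical coordinate with arbitrary lower data. Uniformly in $a$ and $s$, among any $K$ consecutive $j$ the top coordinate then covers both halves. This covering holds because its increments form a polynomial in $j$ with leading coefficient of size $\ge\eps$, and a bounded-length polynomial run with such a leading term cannot stay in a half-circle. If instead the leading size is small but some lower coordinate step is large, we descend to that coordinate. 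So the two-coloring is defined by a nested rule: use the coordinate of highest complexity among those whose step is large.

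Making this nested rule consistent with a single two-coloring, independent of $s$, is where we expect the real work. We would first try to handle it by a pigeonhole over the finitely many ``types'' of $s$, combined with a quantitative version of the Host--Kra--Maass crossing argument, in the form of Theorem~\ref{thm:bohr}'s proof, applied fiberwise.
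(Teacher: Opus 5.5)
There is a genuine gap, and it sits exactly where you defer ``the real work''. A coloring must be a function of $n$ alone, fixed before $s$ is chosen. Your nested rule (use the highest-complexity coordinate whose step is large) depends on $s$. Likewise, the induction on degree and depth yields different colorings for different types of $s$. Merging these into a single two-coloring is the entire content of the theorem, and this is already true in the multi-frequency Bohr case. Naive merges fail: for $\alpha_1=1/2$ and $\alpha_2=1/6$, the XOR of the two half-circle colorings is constant along every progression of difference $3$, even though $\tnorm{3\alpha_1}=\tnorm{3\alpha_2}=1/2$. Pigeonholing over types cannot help, since the coloring cannot see the type.

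Your single-coordinate crossing claim is also false, because of torsion. Take $P(n)=n^2/8$. Then $\tnorm{P(2)}=1/2$, yet $P(a+2j)\equiv 1/8\pmod 1$ for every odd $a$ and every $j$, since odd squares are $1$ modulo $8$. So a polynomial run in $j$ whose leading coefficient $1/2$ is far from $\Z$ can be constant modulo $1$; this is the $\binom t2$ phenomenon. In particular, the half-circle coloring of $P(n)$ is constant on arbitrarily long progressions with difference $2\notin B$. Your discrete Taylor expansion has a further problem: it is only valid off the rounding discontinuities, while the coloring must control every term of the progression.

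The paper obtains the statement as the case $r=2$ of Theorem~\ref{thm:omitted}, and it handles both problems with ideas absent from your sketch.

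\emph{Uniformity in $s$.} Instead of selecting a coordinate, the paper uses one cyclic shell coloring $\kappa(A)=\lfloor F(A)/h\rfloor \bmod r$ on $G=\prod\UT_{s_i+1}(\R)$. Here $F$ is the lattice-minimized sum of squares of all off-diagonal coordinates, so it responds to motion in every coordinate at once. Compactness of $\Kpol$ gives a uniform oscillation bound. Hence any bounded-degree polynomial curve that moves a distance $\rho$ meets all colors on a fine enough integer mesh, whichever coordinates move.

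\emph{Torsion.} The paper colors $n$ by $\kappa(g(n/L))$ with $L=D!\,\lcm(1,\ldots,W)$. Van der Waerden applied to an auxiliary box coloring gives a short subprogression whose representatives lie in one box. The rounding lemma then produces an exact polynomial lift $p(b+qt)C(t)=Q(t)$ with $C(\Z)\subseteq\Gamma$ and $w_{ij}!\,\tau(C)_{ij}\in\Z$. If a color is omitted, a first-exit argument keeps $Q$ bounded on $[0,J]$. Applying $\delta_{L/q}\circ\tau$ clears the factorial denominators and removes the starting point, giving $g(d)\gamma=E$ with $E$ small. The last-column decoding then yields $\tnorm{P_i(d)}\ll k^{-d_i}$.
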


We will show in Section \ref{sec:Nilmanifolds} that Theorem \ref{thm:2LargeIsNilrecurrent} gives a sharp answer to Question 7.8 of Host, Kra, and Maass \cite{HKM}.

It is noteworthy that until recently, all known examples of sets which were not large had a nil-Bohr obstacle.
In fact, the authors of \cite{HSY} asked whether nil-Bohr recurrence implies largeness.
If this were true, then combined with the present result we would have a resolution to the $2$-large versus large conjecture.
However, the first author \cite{AlweissObstacles} exhibited a set of nil-Bohr recurrence which is not large.
While the example in \cite{AlweissObstacles} is not $2$-large either, the $2$-largeness of a modification due to Green \cite{green2026alweiss} remains open.

For $0<\delta<1$, we call $S\subseteq\N$ \emph{$\delta$-Szemer\'edi-large} if every $A\subseteq\N$ with upper density $\ud(A) \ge \delta$ contains arbitrarily long arithmetic progressions with
common difference in $S$, and $S$ is \emph{Szemer\'edi-large} if it is $\delta$-Szemer\'edi-large for every $\delta \in (0,1)$.
To avoid confusion, throughout the rest of the paper we refer to $r$-large and large sets as $r$-vdW-large sets and vdW-large sets respectively.

For a fixed $k$, we use the terms \emph{$(r,k)$-vdW-large} and \emph{$(\delta,k)$-Szemer\'edi-large} when only $k$-term progressions are required in the corresponding definitions.
It is immediate that $(\delta,k)$-Szemer\'edi-largeness implies $(\lfloor\delta^{-1}\rfloor,k)$-vdW-largeness, but we do not even know if $(r,k)$-vdW-largeness implies $(\delta,k)$-Szemer\'edi-largeness for some $\delta = \delta(r) > 0$.
Nonetheless, we can prove the following result.

\begin{theorem}\label{thm:deltaSzLargeIsNilrecurrent}
    If $S$ is $\delta$-Szemer\'edi-large for some $\delta \in (0,1)$, then $S$ is a set of nil-Bohr recurrence.
\end{theorem}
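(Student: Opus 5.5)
We argue by contraposition. Suppose $S$ is not a set of nil-Bohr recurrence, so there are special generalized polynomials $P_1,\dots,P_k$ and $\eps>0$ with $\max_i\tnorm{P_i(s)}\ge\eps$ for every $s\in S$. For each $\delta\in(0,1)$ we will build a set $A\subseteq\N$ with $\ud(A)\ge\delta$ that contains no long arithmetic progression with common difference in $S$. The natural candidate is a ``sublevel set'' of the nil-orbit. Let $g(n)\in(\R/\Z)^{M}$ be the finite vector of all centered fractional parts $\cpart{Q(n)}$, where $Q$ ranges over the $P_i$ and all of their sub-expressions $\mathsf L(\alpha_jn^{w_j},\dots,\alpha_sn^{w_s})$. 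We then take
\[
A=\{n:\ \coord{g(n)-\theta}<\eta\}
\]
for a suitable target $\theta$ and radius $\eta$. For $\delta$ close to $1$, a single small box has too little density, so instead we take $A$ to be the complement of a thin ``bad'' set chosen so that every long progression with difference in $S$ must hit it.

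The key mechanism should be the same one that presumably drives the proof of Theorem~\ref{thm:2LargeIsNilrecurrent}. Along a progression $a+jd$ with $0\le j<L$, the quantity $P_i(a+jd)$ is, modulo $1$ and up to errors controlled by how close the inner fractional parts are to $\pm1/2$, a generalized polynomial in $j$. Its leading behaviour is governed by $P_i(d)$-type terms. Since $\tnorm{P_i(d)}\ge\eps$ for some $i$, the sequence $j\mapsto P_i(a+jd)$ cannot stay inside a short arc for $L$ steps. We would prove this by a van der Corput / finite-differencing argument: iterated differences of $P_i(a+jd)$ in $j$ eventually produce (a multiple of) $P_i(d)$ plus bounded-error terms. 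It follows that the orbit along the progression must visit a prescribed small arc $I_i$ of width about $\eps/C$ around some position.

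The construction for density $\delta$ is then as follows. Choose $m$ disjoint arcs $I^{(1)},\dots,I^{(m)}$ in $\R/\Z$ with $m\approx(1-\delta)^{-1}$. Define $A_t$ to be the set of $n$ for which no coordinate of $g(n)$ relevant to the $P_i$ lies in the union of a family of rotated copies of $I^{(t)}$. Equidistribution of generalized polynomials (via Leibman's theorem on nilsequences, which lets us compute densities of such semialgebraic sublevel sets) gives $\frac1m\sum_t d(A_t^c)\lesssim 1/m$. Hence some $A_t$ has density at least $\delta$. In the rational-degenerate cases the density exists but is not uniform; we handle this by first reducing, as is standard, to a totally irrational configuration on a nilmanifold.

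The main obstacle is the forcing step. We must show rigorously that every progression of length $L=L(\eps,P)$ with difference in $S$ hits every prescribed arc family, despite the discontinuities of $\nint{\cdot}$. We would handle these by induction on depth, as in \cite{HSY}. At depth $s$, we either find a long subprogression on which every inner rounding is ``affine'' (no wraparound), where the differencing argument applies, or we pass to a subprogression on which an inner term $\tnorm{\mathsf L(\dots)}$ is forced to sweep, and we recurse on lower depth. Pigeonholing on subprogressions (via van der Waerden inside the progression) will make this work, with $L$ depending only on $\eps$ and the $P_i$.
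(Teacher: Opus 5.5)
The step that fails is the forcing claim that the orbit along a progression with difference in $S$ ``must visit a prescribed small arc.'' The failure is structural, not technical. No thin bad set defined through $g(n)$, i.e.\ through the $P_i$ with their original coefficients, can meet every long progression with difference in $S$.

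Take $P_1(n)=\alpha n$ with $\alpha$ irrational, $\eps<1/2$ and $S=\{s:\tnorm{\alpha s}\geq\eps\}$, so that $g(n)=\cpart{\alpha n}$. Pick $d$ with $\alpha d=N+\tfrac12+\beta$, where $N\in\Z$ and $|\beta|$ is tiny. Then $d\in S$, and for $0\leq j<L$ the points $\alpha(a+jd)$ lie within $L|\beta|$ of $\alpha a$ or of $\alpha a+\tfrac12$ modulo $1$. The orbit does leave every short arc, but it only visits two clusters, so ``cannot stay in a short arc'' does not give ``visits a prescribed arc.''

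Quantitatively, let $A=\{n:\cpart{\alpha n}\in\Omega\}$ with $\Omega$ a finite union of intervals and $\mu(\Omega)>1/2$. Then $\Omega\cap(\Omega-\tfrac12)$ contains an open interval $U$. Choosing $\alpha a$ in the middle of $U$ and $|\beta|<|U|/(3L)$ places the whole $L$-term progression inside $A$. So every blocking set of your type has density at most $1/2$; rotations near $p/q$ with small $q$ impose similar constraints. The contrapositive, however, needs $\ud(A)\geq\delta$ for $\delta$ arbitrarily close to $1$. Rotated copies of arcs, averaging over $t$, and the depth induction cannot repair this. Separately, ``a multiple of $P_i(d)$ plus bounded-error terms'' carries no information modulo $1$ unless the errors are small.

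The missing idea is to pull back through a finite cover. The paper colors $n$ by $\kappa(g(n/L))$ with $L=D!\,\lcm(1,\ldots,W)$, using $r=\lceil(1-\delta)^{-1}\rceil$ colors via Theorem~\ref{thm:omitted}.
\begin{enumerate}
\item Van der Waerden applied to an auxiliary box coloring gives a short subprogression with step $q\leq W$, hence $q\mid L$, whose canonical representatives lie in one tiny box.
\item Lemma~\ref{lem:rounding} turns this into an exact identity $p(b+qt)C(t)=Q(t)$, with $Q$ a genuine polynomial curve and $C(\Z)\subseteq\Gamma$. This is how the roundings $\nint{\cdot}$ are handled, with no error terms at all.
\item In your example, $\alpha d/L=(N+\tfrac12+\beta)/L$. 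A bounded multiple $c\alpha d/L$ with $c\mid L$ is within $1/M$ of an integer, yet at nonzero distance at least about $c/(2L)$ from it. The sampled orbit is therefore a long, finely sampled path rather than two clusters (Section~\ref{sec:BohrCase}).
\item A first-exit argument plus the uniform oscillation of $F$ on $\Kpol$ shows that an escaping lift meets all $r$ shell colors. If there is no escape, interpolation together with $\tau$ and $\delta_{L/q}$ forces $\tnorm{P_i(d)}\leq Ck^{-d_i}$, which is $<\eps$ for large $k$.
\end{enumerate}
Hence, for large $k$, every $k$-term progression with difference in $S$ meets all colors. Since $\sum_i\underline{d}(C_i)\leq1$ for the color classes, some $\N\setminus C_i$ has upper density at least $1-1/r\geq\delta$ and contains no such progression. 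This last step also makes Leibman's equidistribution theorem and the reduction to totally irrational configurations unnecessary.
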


In fact, we prove the following \textit{polychromatic} result generalizing Theorems \ref{thm:2LargeIsNilrecurrent} and \ref{thm:deltaSzLargeIsNilrecurrent}.

\begin{theorem}\label{thm:omitted}
For every $r\geq2$ and $m,D\geq1$, there are constants $C=C(r,m,D)>0$ and $k_0=k_0(r,m,D)\geq2$ with the following property.
For every family of special generalized polynomials $P_1,\ldots,P_m$ of degrees $1\leq d_i\leq D$, there is an $r$-coloring $c$ of $\N$ such that, for all $a,d\in\N$ and all $k\geq k_0$, if the progression $\{a, a+d, \cdots, a+(k-1)d\}$ does not contain elements of all $r$ colors, then $||P_i(d)||_{\R/\Z} \le Ck^{-d_i}$ for $1 \le i \le m$.

The coloring is fixed independently of $k$, and the constants are independent of all the real coefficients.
\end{theorem}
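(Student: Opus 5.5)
We will construct the coloring by pulling back an $r$-coloring of a compact space through a "lifted" version of the generalized polynomials, in the spirit of the one-dimensional argument of Host--Kra--Maass recalled above. The plan is to induct on the depth and degree of the special generalized polynomials. For a single special generalized polynomial $P$ of degree $w$, we attach to each $n$ a finite vector of auxiliary quantities: the coefficients $\alpha_i n^{j}$ and the inner nearest-integer expressions $\nint{\mathsf L(\cdot)}$ evaluated at $n$. From these we read off the "discrete derivatives" of $n\mapsto P(n)$ along a progression $a+jd$. The guiding model is $P(n)=\alpha n^{w}$. Along $a+jd$ the sequence $P(a+jd)$ is a polynomial in $j$ whose top coefficient is $\alpha d^{w}$. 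Hence if $\tnorm{\alpha d^{w}}$ is larger than $Ck^{-w}$, the values $P(a+jd)$ mod $1$ for $0\le j<k$ cannot all stay inside a short arc. This is a standard van der Corput / finite-difference fact: the $w$-th difference of a sequence confined to an interval of length $\ell$ is bounded by $2^w\ell$.

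The coloring is defined by partitioning $\R/\Z$ (or a product of circles) into $r$ arcs and coloring $n$ according to the arc containing a suitable phase built from $P_1(n),\dots,P_m(n)$. Since all $r$ colors must appear, each arc must be visited. The cleanest device is to take the phase $\theta(n)=\sum_i \lambda_i P_i(n)$ with suitably separated integer weights, or to use a product coloring combined through a mixed-radix encoding so that only $r$ colors are used.

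The key steps, in order, are as follows.

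\begin{enumerate}
\item \textbf{Polynomial case.} Prove the statement for genuine polynomials $\alpha n^{w}$. If the phase omits a color on $\{a+jd\}$, then it avoids a fixed arc of length $\asymp 1/r$. We then show that a polynomial sequence of degree $w$ in $j$, whose leading coefficient has $\tnorm{\cdot}\ge Ck^{-w}$, is $1/(2r)$-dense mod $1$ on $0\le j<k$ for $k\ge k_0$. The tool is a quantitative Weyl/equidistribution estimate with constants uniform in the coefficients. For this we use the elementary fact that an avoided arc forces some small difference: repeatedly difference the sequence and apply pigeonhole.
\item \textbf{Generalized polynomials.} For a generalized polynomial $\mathsf L(\alpha_1n^{w_1},\dots)$, write $P(a+jd)$ as a polynomial in $j$ up to bounded integer "carry" errors. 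The nearest-integer brackets $\nint{\cdot}$ of the inner expressions change in a controlled piecewise-polynomial way once the inner expressions have small derivatives. Inductively, the inner polynomials $P'=\mathsf L(\alpha_2n^{w_2},\dots)$ can be assumed to satisfy $\tnorm{P'(d)}\le Ck^{-\deg P'}$. We achieve this by adding those inner polynomials to the family of colored phases, which only enlarges $m$ by a bounded amount depending on $D$. On a subprogression of length $\gg k$ the brackets then behave polynomially, and the degree-$w$ leading term becomes $\alpha_1 d^{w_1}\nint{P'(d)}$ up to errors of size $O(k^{-\deg P})$.
\item \textbf{Assembling $r$ colors.} Combine the finitely many phases into one coloring with exactly $r$ colors. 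We use a single circle coordinate $\theta(n)=\sum_i \cpart{P_i(n)}\cdot$ (weights), or choose the arcs via a lexicographic rule. The requirement is that missing a color forces some phase to avoid an arc of length $\gg_{r,m}1$.
\end{enumerate}

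The main obstacle is step (ii): controlling the brackets $\nint{\cdot}$ uniformly in the coefficients. When an inner expression lies near a half-integer, the bracket jumps along the progression, and this destroys polynomiality. We will first try to handle this by passing to a subprogression of length $k/C'$ on which no jump occurs. By pigeonhole this is possible because the inner expression has small derivatives and therefore crosses half-integers at most $O(1)$ times. Shrinking $k$ by a constant factor only changes $C$, which gives the claimed $Ck^{-d_i}$ bound with constants depending on $r,m,D$ alone.
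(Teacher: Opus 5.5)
There is a genuine gap in step (i), and it undermines the whole plan. You claim that a degree-$w$ sequence in $j$ whose leading coefficient $\beta$ satisfies $\tnorm{\beta}\geq Ck^{-w}$ is $1/(2r)$-dense modulo $1$. This is false, because rationals with small denominators obstruct it. For the same reason the theorem fails for colorings by arcs of integer combinations of the $P_i(n)$.

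\emph{Case $r\geq3$.} Take $P(n)=\alpha n$ with $\alpha$ irrational, and color $n$ by the arc containing $\lambda\alpha n$, for a fixed $\lambda\in\Z\setminus\{0\}$. Choose $d$ with $\lambda\alpha d$ within $1/(100rk)$ of $\tfrac12$ modulo $1$. Then the phases $\lambda\alpha(a+jd)$, $0\leq j<k$, lie in two tiny clusters around antipodal points. So for suitable $a$ at most two colors occur, while $\tnorm{\alpha d}\geq 1/(2|\lambda|)-1/k$.

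\emph{Case $r=2$.} The same happens in degree $2$. Suppose $\lambda\alpha d^2$ and $2\lambda\alpha ad$ are both within $1/(100k^2)$ of $\tfrac12$ modulo $1$; such $d$, and then $a$, exist by Weyl equidistribution. Then $\lambda\alpha(a+jd)^2\equiv\lambda\alpha a^2+j(j+1)/2+O(1/50)\equiv\lambda\alpha a^2+O(1/50)$. This gives monochromatic $k$-term progressions with $\tnorm{\alpha d^2}$ bounded below.

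Your finite-difference fact cannot exclude this. For a phase confined to a short arc it only controls $\tnorm{w!\,\alpha d^w}$. After the pigeonholing you propose, it only controls $\tnorm{w!\,q^w\alpha d^w}$ for some step $q$. The factor $w!\,q^w$ erases exactly the information the theorem is about. Moreover, omitting one of $r$ colors only confines the phase to an arc of length $1-1/r$, not to a short one.

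Steps (ii) and (iii) have independent problems.
\begin{enumerate}
\item The inductive bound $\tnorm{P'(d)}\leq Ck^{-\deg P'}$ does not control the lower-order $j$-coefficients of $P'(a+jd)$, so it does not bound the bracket jumps. For example, take $P'(n)=\beta n^2$ with $\beta d^2\approx0$ and $2\beta ad\approx\tfrac12$ modulo $1$. Then $\nint{P'(a+jd)}$ acquires a $(-1)^j$ term and crosses half-integers at almost every step.
\item A weighted phase $\lambda_1P_1+\lambda_2P_2$ cannot separate the $P_i$. Take $P_1=\alpha n$, $P_2=\beta n$ with $1,\alpha,\beta$ linearly independent over $\mathbb{Q}$. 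Choose $d$ with $\alpha d\approx\tfrac13$ and $\lambda_1\alpha d+\lambda_2\beta d\approx0$ modulo $1$; the combined phase is then nearly constant. Likewise, summing per-phase arc indices modulo $2$ is constant when $P_1=P_2$.
\end{enumerate}

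The missing idea is the paper's device for neutralizing the factor $w!\,q^w$.
\begin{enumerate}
\item It colors $n$ through $g(n/L)$, i.e.\ through the rescaled data $\alpha_i(n/L)^{w_i}$ packaged in a unipotent matrix lift. Here $L=D!\,\lcm(1,\ldots,W)$ is fixed before any coefficients are seen.
\item Van der Waerden for an auxiliary grid coloring gives a subprogression of step $q\leq W$ whose canonical representatives lie in one small box. The polynomial rounding lemma then yields an exact identity $p(b+qt)C(t)=Q(t)$ with $C(\Z)\subseteq\Gamma$. This replaces all bracket bookkeeping.
\item The primary coloring uses thin shells of a single size function $F$ on the whole product group, cycled through the $r$ colors. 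Non-flatness of $F$ along nonconstant polynomial curves, together with compactness, gives a uniform oscillation. A first-exit argument then keeps $Q$ within $\tfrac14$ of $Q(0)$ on all of $[0,J]$, so the entries of $\tau(Q)$ are $O(J^{-w_{ij}})$.
\item Finally, $\delta_{L/q}\circ\tau$ removes the starting point and the step $q$. Since $(L/q)^w/w!\in\Z$, it also turns $\tau(C)$ into an element of $\Gamma$. Last-column decoding then bounds $\tnorm{P_i(d)}$ itself, rather than $\tnorm{w!\,q^wP_i(d)}$.
\end{enumerate}
Without this rescaling-and-divisibility mechanism, and a single size function handling all blocks at once, the arc colorings you describe do not prove the statement.
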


We see that Theorem~\ref{thm:2LargeIsNilrecurrent} follows directly from Theorem~\ref{thm:omitted} by taking \(r=2\).
For Theorem~\ref{thm:deltaSzLargeIsNilrecurrent}, take the $P_i$ witnessing the failure of nil-Bohr recurrence of $S$ and \(r=\lceil(1-\delta)^{-1}\rceil\) in Theorem~\ref{thm:omitted}, and let $C_i$ be the elements of color $i$ for $1 \le i \le r$.
Some \(\N\setminus C_i\) has upper density at least \(1-1/r\geq\delta\). By Theorem~\ref{thm:omitted} for sufficiently large $k$, any $k$-AP with common difference in $S$ contains elements of all $r$ colors and thus is not contained in $\N \setminus C_i$. Hence $S$ is not $\delta$-Szemer\'edi-large.

The same argument applies to any density
\(\mathfrak d:\mathcal P(\N)\to[0,1]\) so that for every partition $\mathbb{N} = \bigcup_{i = 1}^rC_i$, there exists $1 \le i \le r$ for which $\mathfrak{d}(\mathbb{N}\setminus C_i) \ge 1-r^{-1}$.
This property, rather than finite subadditivity alone\footnote{Define \(\nu(A)\) to be \(0\) if \(A\) is finite,
\(1\) if \(A\) is cofinite, and \(1/2\) otherwise.
Then \(\nu\) is normalized, monotone, translation-invariant, and
finitely subadditive.
Nonetheless, for every $r \ge 3$ the partition
\(\N=\bigcup_{i = 1}^rC_i\) into residue classes modulo \(r\) satisfies \(\nu(\N\setminus C_i)=1/2<1-r^{-1}\) for all $1 \le i \le r$.}, is what the argument uses.
Hence it holds for upper densities along fixed additive or multiplicative F{\o}lner sequences, additive or multiplicative
upper Banach density, and the density \(\dhash\) defined in \cite{RichterDensity}.
Thus the corresponding versions of Theorem~\ref{thm:deltaSzLargeIsNilrecurrent} hold when \(\delta\)-Szemer\'edi-largeness is defined using \(\mathfrak d(A)\geq\delta\).

For a finite set $\Omega\subseteq\N$ and $B\subseteq\N$, define
\begin{align}
 r_k(\Omega)&=\max\{|A|:A\subseteq\Omega\text{ contains no $k$-term AP}\},
 \label{eq:rk}\\
 r_k(\Omega;B)&=\max\{|A|:A\subseteq\Omega\text{ contains no $k$-term AP
 with difference in }B\}.
 \label{eq:rkB}
\end{align}
Determining the growth rate of the Szemer\'edi numbers $r_k(N) := r_k([1,N])$ is an important problem in additive combinatorics (see e.g. \cite{ImprovedBoundsForSzemeredi,OAI:Quasipolynomial-Bounds-for-Arithmetic-Progressions-September-23-2026}).
Interestingly, Theorem \ref{thm:omitted} implies that restricting the difference to be in a nil-Bohr set does not change these numbers much if $k$ is sufficiently large compared to the parameters of the nil-Bohr set.
This is more evidence of the tight connection between higher-order Fourier analysis and the behavior of $r_k(N)$.

\begin{corollary}\label{cor:extremal}
Fix $\eps,c>0$ and $m,D\geq1$. There is
$k_0=k_0(\eps,c,m,D)$ such that, for every family $P_1,\ldots,P_m$ of
special generalized polynomials of degrees $1\leq d_i\leq D$, every
$k\geq k_0$, and every finite $\Omega\subseteq\N$, the set
\begin{equation}\label{eq:Bk}
 B_k(c)=\{d\in\N:\tnorm{P_i(d)}\leq ck^{-d_i}\text{ for }1\leq i\leq m\}
\end{equation}
satisfies
\begin{equation}\label{eq:extremal}
 r_k(\Omega)\leq r_k(\Omega;B_k(c))
 \leq(1+\eps)r_k(\Omega).
\end{equation}
\end{corollary}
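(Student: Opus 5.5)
The lower bound $r_k(\Omega)\le r_k(\Omega;B_k(c))$ is trivial: a set with no $k$-term AP at all certainly has none with difference in $B_k(c)$. For the upper bound, take $A\subseteq\Omega$ with no $k$-AP whose difference lies in $B_k(c)$ and $|A|=r_k(\Omega;B_k(c))$. The plan is to use the coloring from Theorem~\ref{thm:omitted} with a large number $r$ of colors, and to delete one color class from $A$ so that the remaining set has no $k$-AP at all.

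\textbf{Choice of parameters.} Let $C=C(r,m,D)$ and $k_1=k_0(r,m,D)$ be the constants of Theorem~\ref{thm:omitted}. A $k$-AP in $A\setminus C_j$ misses color $j$. So, for $k\ge k_1$, its difference $d$ satisfies $\tnorm{P_i(d)}\le Ck^{-d_i}$ for all $i$. If $C\le c$, this puts $d$ in $B_k(c)$, contradicting the choice of $A$. The natural first attempt is therefore to pick $r\ge 1/\eps$ and require $C(r,m,D)\le c$.

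\textbf{Main obstacle.} $C(r,m,D)$ is some constant we do not control, so $C\le c$ may fail. To get around this, I would apply Theorem~\ref{thm:omitted} with a shorter length. Any $k$-AP contains $k'$-APs with the same difference, where $k'=\lceil k/\lambda\rceil$ and $\lambda\ge1$ is fixed. If the $k$-AP misses color $j$, so does each $k'$-subprogression. Hence, once $k'\ge k_1$,
\[
\tnorm{P_i(d)}\le C(k')^{-d_i}\le C\lambda^{d_i}k^{-d_i}.
\]
This runs the wrong way: it makes the bound worse, not better. So the constant really must satisfy $C\le c$, or else the gain has to come from rescaling the polynomials. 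The fix I would try is to apply Theorem~\ref{thm:omitted} to the rescaled family $Q_i(n)=P_i(Mn)$, restricting to differences divisible by $M$. This does not obviously help either.

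Since the theorem's constants are independent of the coefficients, I expect the intended route is different. One could choose $r$ depending on $c$ as well, or, more likely, note that the proof of Theorem~\ref{thm:omitted} gives $C$ that can be made arbitrarily small at the cost of enlarging $k_0$. For instance, if missing a color on a $k$-AP forces a bound for every sub-block, one could take $C=C'\,k_0^{-1}$-type improvements. I would simply check in the construction that any prescribed $C$ can be achieved by increasing $k_0$. Together with the fact that $k_0$ is allowed to depend on $c$, this is what the statement permits.

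\textbf{Averaging step.} Now fix $r=\lceil 1/\eps\rceil+1$ and a coloring for which $C\le c$. The classes $A\cap C_j$ partition $A$, so some class satisfies $|A\cap C_j|\le |A|/r$. Then $A'=A\setminus C_j$ has no $k$-AP at all: any such AP misses color $j$, so its difference lies in $B_k(c)$, and it is also an AP in $A$, which is impossible. Therefore
\[
r_k(\Omega)\ge |A'|\ge (1-1/r)|A|,
\]
which gives $|A|\le (1-1/r)^{-1}r_k(\Omega)\le(1+\eps)r_k(\Omega)$. The key point is that the coloring is chosen independently of $\Omega$ and $A$, and $k_0$ depends only on $\eps,c,m,D$.
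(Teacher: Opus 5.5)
Your lower bound and your averaging step are correct and match the paper. That step is: choose $r$ with $r/(r-1)\le 1+\eps$, delete a smallest color class of $A$, and note that every $k$-AP in what remains omits that color. The gap is the step you flagged yourself, namely getting the constant of Theorem~\ref{thm:omitted} down to $c$.

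You defer this to the claim that one can ``check in the construction'' that any prescribed $C$ is achievable by enlarging $k_0$. That claim is not justified, and it is not visible in the construction. There the constant is $C_*\ge\frac34 b_w(2L)^w$ with $L=D!\,\lcm(1,\ldots,W)$ enormous. The only $k$-dependence in the estimate $|E_{ij}|\le\frac34 b_w\bigl(L/(qJ)\bigr)^w$ comes through $qJ\ge k-1-2W$. So raising the threshold on $k$ at best shrinks $(2L)^w$ towards $L^w$; it never makes the constant small. As written, your argument proves the corollary only when $c\ge C(r,m,D)$. Your alternative fixes (shorter subprogressions, rescaling the input to $P_i(Mn)$) do not help, as you suspected.

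The missing idea is to rescale the output rather than the input. This uses only the statement of Theorem~\ref{thm:omitted}, in particular that its constants do not depend on the coefficients.
\begin{itemize}
\item Let $C_0=C(r,2m,D)$ and $q=\lceil C_0/c\rceil$. Apply the theorem to the $2m$ polynomials $P_1,\ldots,P_m,qP_1,\ldots,qP_m$.
\item Each $qP_i$ is again a special generalized polynomial of the same degree $d_i$, because $q\,\mathsf L(\alpha_1n^{w_1},\ldots,\alpha_sn^{w_s})=\mathsf L(q\alpha_1n^{w_1},\ldots,\alpha_sn^{w_s})$.
\item On a $k$-AP with difference $d$ omitting a color, write $P_i(d)=z_i+u_i$ with $z_i\in\Z$ and $|u_i|=\tnorm{P_i(d)}\le C_0k^{-d_i}$.
\item If $k\ge 4qC_0$, then $|qu_i|\le 1/4$, so there is no wraparound. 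Hence $q|u_i|=\tnorm{qP_i(d)}\le C_0k^{-d_i}$, that is, $\tnorm{P_i(d)}\le (C_0/q)k^{-d_i}\le ck^{-d_i}$.
\end{itemize}
Since $q$ depends only on $r,m,D,c$, both the coloring and the new threshold $\max\{k_0(r,2m,D),\lceil 4qC_0\rceil\}$ depend only on $\eps,c,m,D$. This is the paper's Lemma~\ref{lem:constant}, and your averaging argument then finishes the proof.
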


We emphasize that the threshold $k_0(\epsilon,c,m,D)$ does not depend on the coefficients of $P_i$, and also that $B_{k+1}(c) \subseteq B_k(c)$. It would be interesting to prove this corollary for a smaller value of $k_0$.



\begin{remark}\label{rem:PolychromaticRamseyTheory}
    The passage from Theorem~\ref{thm:omitted} to its partition and density consequences uses a standard principle from the theory of polychromatic colorings.
Such a coloring partitions the vertices of a hypergraph into transversals, each meeting every edge; see \cite[Section~1.1]{BollobasEtAl2013}. Consequently, merging the
colors into two nonempty groups gives a coloring with no monochromatic edge, while the complement of each original color class is independent.
In the finite setting, deleting a smallest color class retains at least a proportion \(1-1/r\) of the vertices;
this observation is recorded explicitly in \cite[Observation~4]{Palvolgyi2024}.
A corresponding density argument for translates of a fixed finite subset of \(\Z\) appears in \cite[Lemma~1]{AxenovichEtAl2019}, where polychromatic colorings yield blocking sets of small density.
Our upper-density deduction above applies the same complement-of-one-color principle to the family of progressions whose differences violate the prescribed approximation conditions.
\end{remark}

\noindent\textbf{Organization:}
In Section \ref{sec:BohrCase} we give a new proof of Theorem \ref{thm:bohr} based on the proof of Theorem \ref{thm:omitted}.
In Section \ref{ssec:Outline} we give an outline of how our proof of Theorem \ref{thm:bohr} generalizes to a proof of Theorem \ref{thm:omitted}.
In Sections \ref{ssec:PolynomialMatricesAndIntegerCorrections}-\ref{ssec:recovery} we gather all of the technical lemmas needed to give a fully rigorous proof of Theorem \ref{thm:omitted} in Section \ref{ssec:TheProof}.
In Section \ref{sec:Nilmanifolds} we explain why Theorem \ref{thm:2LargeIsNilrecurrent} gives a sharp answer to \cite[Question 7.8]{HKM}.
In Section \ref{sec:extremal} we prove Corollary \ref{cor:extremal}.
Lastly, in Section \ref{sec:questions} we consider some directions for future research.\\

\noindent\textbf{AI Statement:} This paper is the product of roughly 14 months of work with some breaks in the middle. 
The initial plan of attack to prove Theorem \ref{thm:2LargeIsNilrecurrent} was to extend the methods of \cite{AlweissBohr} involving random colorings to the setting of nilrotations.
While most of the ideas extended easily, there were new technical difficulties that arose from working with nilmanifolds.
Initially, AI was used to tutor the authors about nilmanifolds, Mal'cev bases, equidistribution, Lie groups, and differential geometry so that they could try to rigorously prove Theorem \ref{thm:2LargeIsNilrecurrent}.
When this paper was  largely completed, ChatGPT-6 was released, so the authors first uploaded their draft into Astra to have it complete the final details involving technical calculations with Mal'cev bases and equidistribution.
Then, the authors asked GPT-6 if there existed a simpler proof of Theorem \ref{thm:2LargeIsNilrecurrent} that avoids the heavy machinery of nilmanifolds and only uses elementary calculations involving generalized polynomials, which resulted in the shell coloring proof of the present paper.
The authors then discussed with GPT-6 the possibility of also proving Theorem \ref{thm:deltaSzLargeIsNilrecurrent}, which led them to Theorem \ref{thm:omitted}.
The statement and proof of Corollary \ref{cor:extremal} were given to the authors by GPT-6 after it was asked for applications of the main results.

After this article is submitted to arXiv and ChinaXiv, the website \url{https://sohailfarhangi.com/2-large-sets-are-sets-of-nil-bohr-recurrence/} will be updated with supplementary materials for this article.
In particular, there will be intermediate outputs produced by the AI, an AI-assisted article containing the original proof involving nilmanifolds and equidistribution, and YouTube video lectures on the two proofs and on the way in which AI was used in the preparation of this article.

\section{The coloring in the Bohr case}\label{sec:BohrCase}

We first present an alternate proof of Theorem~\ref{thm:bohr}, which we will later generalize to prove Theorem~\ref{thm:omitted}.
This proof uses a \textit{shell coloring} rather than the random coloring of \cite{AlweissBohr}. As discussed above, the random-coloring approach of \cite{AlweissBohr} can also be extended to nilrotations. A common feature of torus rotations and nilrotations is their low orbit complexity, reflected in their zero topological entropy, which underlies the probabilistic argument. Here we use a shell coloring to obtain a simpler proof.

\begin{proof}[Proof of Theorem \ref{thm:bohr}]
Let $\alpha_1,\ldots,\alpha_d,\eps\in\R^{+}$
and let $S$ be the set of $x$ such that
$\max_{1\le i\le d}\|\alpha_i x\|_{\R/\Z}>\eps$. We will pick a sufficiently large integer $K$ as a function of $d$ and $\eps$, and then pick some $L$ divisible by all integers up to $M^d$ for some sufficiently large function $M$ of $K$. In particular, $M=K^2$ and $L=(M^d)!$ suffices. As in \cite{AlweissBohr}, we define a coloring of
$(\R/\Z)^d$ (actually, in \cite{AlweissBohr} we color $(\R/L\Z)^d$, but this normalization will be more convenient for the general case), then pull it back to $\Z$ via the map $h=g \circ f: \Z \to \R^d \to (\R/\Z)^d$, where the first map $f$ sends
$n\mapsto(\alpha_1n/L,\ldots,\alpha_dn/L)$
and the second map $g$ is the natural quotient map
by $\Z^d$, with fundamental domain $[-1/2,1/2)^d$.

However, rather than splitting this into $M^d$ randomly colored cells as in
\cite{AlweissBohr}, we color the point
$(x_1,\ldots,x_d)\in[-1/2,1/2)^d$ by the parity of $\left\lfloor K\sum_{i=1}^{d}x_i^2\right\rfloor$, so that if this floor is even then the point is red and if it is odd then the point is blue.

For $y \in S$, consider the image of $x, x+y, \cdots, x+ky$ under the map $h$. We will prove that when $k$ is sufficiently large and divisible by a sufficiently large factorial, this image contains points of both colors. We partition $(\R/\Z)^d$ into $M^d$ boxes of side length $1/M$ in the obvious way.
By pigeonhole, there is some $0 \le a<b \le M^d \ll k$ so that $h(x+ay)$ and $h(x+by)$ are in the same box. Hence $h(x+ay)$ and $h(x+by)$ differ by at most $1/M$ in each coordinate, and on $(\R/\Z)^d$ the same is true of $h(x)$ and $h(x+cy)$ where $1 \le c=b-a \le M^d$. Let $v\in[-1/M,1/M]^d$ be a representative of $h(cy)$.

Now, some coordinate $y'$ of $f(y)$ satisfies
$\|Ly'\|_{\R/\Z}>\eps$, and the same coordinate of $f(cy)$ is $cy'$.
Write $cy'=z+\eta$, where $z$ is an integer and $\eta$ is the corresponding coordinate of $v$.
Then $Ly'=z\frac{L}{c}+\frac{L\eta}{c}$. Since $c \mid L$, this means that
$\|\frac{L\eta}{c}\|_{\R/\Z}=\|Ly'\|_{\R/\Z}>\eps$,
so we must have $|\frac{L\eta}{c}|>\eps$ and hence
$|\eta|>c\eps/L$.

Since $c \mid k$, let $J=k/c$, and consider the linear path on the torus
\[
\{(f(x)+tv)+\Z^d:0\leq t\leq J\}\subseteq(\R/\Z)^d.
\]

\begin{figure}[H]
\centering
\begin{tikzpicture}[x=8cm,y=8cm]
  \definecolor{shellred}{HTML}{EAA0A5}
  \definecolor{shellblue}{HTML}{8BB5DD}
  \def\ShellK{48}

  \path[use as bounding box] (-.5,-.5) rectangle (.5,.5);
  \pgfmathtruncatemacro{\LastShell}{floor(\ShellK/2)}

  \begin{scope}
    \clip (-.5,-.5) rectangle (.5,.5);

    \foreach \j in {\LastShell,...,0}{
      \pgfmathsetmacro{\Radius}{sqrt((\j+1)/\ShellK)}
      \pgfmathtruncatemacro{\Parity}{mod(\j,2)}
      \ifnum\Parity=0
        \fill[shellred] (0,0) circle[radius=\Radius];
      \else
        \fill[shellblue] (0,0) circle[radius=\Radius];
      \fi
    }

    \draw[white,line width=1.8pt]
      (-.5,-.1) -- (.3,.5);
    \draw[black!85,line width=.4pt]
      (-.5,-.1) -- (.3,.5);

    \foreach \k in {0,...,100}{
      \pgfmathsetmacro{\PointX}{-.5+.8*\k/100}
      \pgfmathsetmacro{\PointY}{-.1+.6*\k/100}
      \fill[black] (\PointX,\PointY) circle[radius=1pt];
    }
  \end{scope}
\end{tikzpicture}
\caption{The coloring of $[-1/2,1/2]^2$ with $K=48$, alternately colored shells, and marked points equally spaced along a segment crossing the square.}
\label{fig:shell-progression}
\end{figure}

This path will contain $h(x), h(x+cy), h(x+2cy), \cdots, h(x+ky)$. In particular, since $k$ is chosen large enough and some coordinate of $v$ has magnitude greater than $c\eps/L$, this path has length greater than $k\eps/L\gg1$.
Furthermore, $h(x), h(x+cy), h(x+2cy), \cdots, h(x+ky)$ will be $\frac{d}{M}$-dense in this path in the $\ell^1$ norm.
This path must completely cross at least $\Omega_d(K)$ of the shells. The $\frac{d}{M}$-density condition comfortably guarantees that $\{h(x), h(x+cy), \cdots, h(x+ky)\}$ contains a point from each completely crossed shell, and so $\{h(x), h(x+cy), \cdots, h(x+ky)\}$ contains points of both colors.
Hence, in our coloring of $\Z$ induced by the preimage of $h$, the sub-progression $\{x, x+cy, \cdots, x+ky\}$ of $\{x, x+y, \cdots, x+ky\}$ contains points of both colors as well.
\end{proof}

We remark that a careful analysis of the bound here shows that
we can take
\[
k \le \frac{\exp((Cd^2)^d)}{\eps},
\]
double exponential in $d$ and linear in $1/\eps$.
For the coloring in \cite{AlweissBohr}, a careful analysis gives
\[
k \le \frac{\exp((Cd\log(2d))^{d})}{\eps},
\]
better but also double exponential in $d$ and linear in $1/\eps$.

\begin{question}
Given $d$ and $\eps$, what is the smallest $k$ such that,
for every $\alpha_1,\ldots,\alpha_d$, there is a two-coloring
of $\N$ in which every arithmetic progression
$x,x+y,\ldots,x+ky$ satisfying
\[
\max_{1\le i\le d}\|\alpha_i y\|_{\R/\Z}>\eps
\]
contains points of both colors?
\end{question}

Any progress on this question would be welcome.
Can the dependence on $d$ be improved from double exponential
to single exponential? Progress on quantitative bounds in
the more general nil-Bohr case would also be welcome. In particular, it would be interesting to improve the bounds on $k_0$ in weaker versions of Theorem \ref{thm:omitted} and Corollary \ref{cor:extremal}, where the defining radii of the nil-Bohr sets decrease more
slowly with $k$, or even remain fixed independently of $k$.
For example, one could require only
$\|P_i(d)\|_{\R/\Z}\le\eps$ for every $i$, for a fixed
$\eps>0$, with $k_0$ allowed to depend on $\eps$.

\section{Proof of Theorem \ref{thm:omitted}}

We will now prove our main result, Theorem \ref{thm:omitted}. We will first describe the coloring and give an outline of the general argument.

\subsection{Outline of the argument}\label{ssec:Outline}

In the general case, we will again use a shell coloring, but rather than working with line segments on tori, which are products of circle groups, we must consider polynomial curves and products of upper-unitriangular matrix groups.

For each special generalized polynomial \(P_i\) with $1 \le i \le m$, we will construct an associated matrix \(g_{P_i}(t)\).
In particular, if $P_i=\mathsf L(\alpha_{i,1}t^{w_{i,1}}, \cdots, \alpha_{i,s_i}t^{w_{i,s_i}})$ then

\[
g_{P_i}(t)=
\begin{pmatrix}
1 & -\alpha_{i,1}t^{w_{i,1}} & 0 & \cdots & 0 & 0\\[6pt]
0 & 1 & -\alpha_{i,2}t^{w_{i,2}} & \cdots & 0 & 0\\[6pt]
\vdots & \ddots & \ddots & \ddots & \vdots & \vdots\\[6pt]
0 & \cdots & 0 & 1 & -\alpha_{i,s_i-1}t^{w_{i,s_i-1}} & 0\\[6pt]
0 & \cdots & 0 & 0 & 1 & \alpha_{i,s_i}t^{w_{i,s_i}}\\[6pt]
0 & \cdots & 0 & 0 & 0 & 1
\end{pmatrix}
\ \in\ \mathrm{UT}_{s_i+1}(\R).
\]

Let \(g(t)=(g_{P_1}(t), \cdots, g_{P_m}(t))\) be the $m$-tuple of the associated matrices, let \(G=\prod_{i=1}^{m} \mathrm{UT}_{s_i+1}(\R)\) be the corresponding product of matrix groups containing the $g_{P_i}(t)$, and let \(\Gamma=\prod_{i=1}^{m} \mathrm{UT}_{s_i+1}(\Z)\) be its subgroup of integer matrices.
For the block associated to \(P_i\), assign the matrix coordinate \((u,v)\), with \(u<v\), the weight
\[
w_{i;uv}:=w_{i,u}+w_{i,u+1}+\cdots+w_{i,v-1}.
\]
These matrices encode generalized polynomials.
In particular, for each $t \in \mathbb{R}$, there is some $\gamma_t \in \Gamma$ such that each $\cpart{P_i(t)}$ appears as an entry of the matrix $g(t)\gamma_t$, which is also the canonical representative of $g(t)$ in $G/\Gamma$ (cf. Example \ref{ex:Reduction}).

Our coloring will generalize the sum of squares of the $\R/\Z$ norm from the Bohr case. In particular, we will use the continuous right-\(\Gamma\)-invariant function

\[
F(A)= \min_{\gamma\in\Gamma}
      \sum_{i<j}(A\gamma)_{ij}^{\,2}=
\min_{(\gamma_1,\ldots,\gamma_m)\in\Gamma}
\sum_{i=1}^{m}
\sum_{1\le u<v\le s_i+1}
(A_i\gamma_i)_{uv}^{2}.
\]

In other words, for each of the upper triangular matrices in $\mathrm{UT}_{s_i+1}(\R)$, we multiply it by some element of $\mathrm{UT}_{s_i+1}(\Z)$ to minimize the sum of squares of its off-diagonal entries, and after doing this for all matrices we take the sum of squares of the off-diagonal entries.


A nonconstant ordinary polynomial matrix curve cannot remain in one level set of \(F\).
Indeed, on a compact parameter interval, \(F\) is the minimum of finitely many nonconstant sums of squares of polynomials.

Working on a suitable compact family $\mathcal{K}$ of polynomial curves gives a uniform positive lower bound for the oscillation of polynomial curves of bounded degree making a fixed-size excursion inside a bounded coordinate box.
We color successive thin shells cyclically with \(r\) colors.
More concretely, let $\kappa:G\rightarrow\{0,1,\cdots,r-1\}$ be given by $\kappa(A) = \left\lfloor\frac{F(A)}{h}\right\rfloor\pmod{r}$ for a sufficiently small $h$.
This produces a coloring under
which every sufficiently fine sample of such an excursion meets all \(r\) colors.
We then color the integers by \(c(n)=\kappa(g(n/L))\), for a fixed integer \(L\) chosen below.

Suppose that a sufficiently long \(k\)-term progression \(\{a,a+d,\ldots,a+(k-1)d\}\) omits a color, and put \(p(t)=g((a+dt)/L)\).
Let $\nu = \sum_{i = 1}^m\binom{s_i+1}{2}$ and view $[-\frac{1}{2},\frac{1}{2})^\nu$ as a fundamental domain for $G/\Gamma$.
An auxiliary finite coloring of $[-\frac{1}{2},\frac{1}{2})^\nu$ records the small box containing the canonical representative of each \(p(j)\).
Van der Waerden's Theorem provides an $M$-term subprogression, with bounded starting index \(b\) and step \(q\), whose representatives lie in one box.
Lemma \ref{lem:rounding} then gives
\begin{equation}\label{eq:Outline}
     p(b+qt)C(t)=Q(t),\qquad C(n)\in\Gamma\quad(n\in\Z)
\end{equation}

where \(C\) and \(Q\) have ordinary polynomial entries with degree bounds $\text{deg}(C_{i;uv}),\text{deg}(Q_{i;uv}) \le w_{i;uv}$, and $Q(n)$ is the canonical representative of $p(b+qn)$ for all $0 \le n < M$.
Since $\kappa(p(b+qn)) = \kappa(Q(n))$ for $n \in \mathbb{Z}$, we see that $\{\kappa(Q(n))\}_{n = 0}^J$ omits at least one of the $r$ colors, where $J := \left\lfloor\frac{k-1-b}{q}\right\rfloor$.

A first-exit argument now keeps \(Q\) bounded on a real interval \([0,J]\).
Suppose instead that \(Q\) first escapes a distance \(\rho\) from its starting point at some time \(T\leq J\).
Rescaling the parameter gives a curve
\(\widetilde Q(u):=Q(Tu)\) in the compact set \(\mathcal K\).
This polynomial curve plays the role of the linear path
on the torus in the Bohr case.
The interpolation estimate in Lemma \ref{lem:interpolation} ensures that \(T\) is large enough for the all-color sampling lemma to apply.
Since \(\widetilde Q(j/T)=Q(j)\), the resulting fine sample of the normalized curve consists of integer samples of the original lift, and these must receive all \(r\) colors, yielding the desired contradiction.
Thus \(Q\) remains bounded on $[0,J]$, and polynomial interpolation on \([0,J]\) bounds the coefficient of degree \(w\) in each weight-\(w\) coordinate by \(O(J^{-w})\).

The leading coefficient-extraction map \(\tau\) removes the
starting point of the progression. The choice of \(L\) ensures
that applying the dilation map \(\delta_{L/q}\) both removes the
auxiliary step $q$ and clears the factorial denominators in
\(\tau(C)\) (cf. Section \ref{ssec:FiniteDifferences}).
In particular, applying \(\delta_{L/q}\circ\tau\) to Equation~\eqref{eq:Outline} yields
\[
g(d)\gamma_d=E,\qquad \gamma_d\in\Gamma,\qquad |E_{i;uv}|\ll_{r,m,D}k^{-w_{i;uv}}.
\]
For sufficiently large \(k\), we see that $E$ is the canonical representative of $g(d)$, so the values \(\cpart{P_i(d)}\) appear as entries of $E$, and we obtain the required bound
\(\tnorm{P_i(d)}\ll_{r,m,D}k^{-d_i}\).
All parameters defining the shell coloring and \(L\) are chosen independently of the real coefficients of the $P_i$ and the progression length.

\subsection{Polynomial matrices and integer corrections}\label{ssec:PolynomialMatricesAndIntegerCorrections}

Fix the exponent list of each of the $m$ expressions for the moment.
For the $s$th list $(w_{s,1},\ldots,w_{s,\ell_s})$, let
$G_s=\UT_{\ell_s+1}(\R)$, the group of upper-triangular matrices with
ones on the diagonal, and set
\begin{equation}\label{eq:product-group}
 G=\prod_{s=1}^m\UT_{\ell_s+1}(\R),\qquad
 \Gamma=\prod_{s=1}^m\UT_{\ell_s+1}(\Z),\qquad
 \nu=\sum_{s=1}^m\binom{\ell_s+1}{2}.
\end{equation}
Thus $\nu$ is the total number of off-diagonal coordinates, not the
number of expressions. All coordinate formulas below are understood
blockwise, and sums and maxima include all the blocks.
In one block assign position $(i,j)$, $i<j$, the weight
\begin{equation}\label{eq:weights}
 w_{ij}=w_i+\cdots+w_{j-1}.
\end{equation}
Every weight is positive and at most $D$, and
$w_{ik}+w_{kj}=w_{ij}$.
Write $\coord{A-B}=\max_{i<j}|A_{ij}-B_{ij}|$ for the coordinate distance.

Let $\Hpol$ be the class of polynomial matrix maps $A:\R\to G$ with
$\deg A_{ij}\leq w_{ij}$. We define two maps $\delta_u:G\to G$ and
$\tau:\mathcal H\to G$: the first dilates each coordinate
according to its weight, while the second extracts the
coefficient of the corresponding weighted degree.
For $B\in G$, $A\in\mathcal H$, and $i<j$, set

\begin{equation}\label{eq:dilation-tau}
 (\delta_u B)_{ij}=u^{w_{ij}}B_{ij}\quad(u>0),\qquad
 \tau(A)_{ij}=[t^{w_{ij}}]A_{ij}(t).
\end{equation}

We will only consider $\delta_u$ and $\tau$ on matrices and matrix maps whose diagonals are all $1$s, and clearly they preserve this property. The symbol $[t^w]$ denotes coefficient extraction.

\begin{lemma}\label{lem:algebra}
The class $\Hpol$ is closed under multiplication, inversion, and affine
changes of parameter. The map $\delta_u$ is an automorphism of $G$, and
\begin{equation}\label{eq:tau-algebra}
 \tau(AB)=\tau(A)\tau(B),\qquad
 \tau\bigl(A(b+qt)\bigr)=\delta_q\tau(A)\quad(q>0).
\end{equation}
Moreover, $\tau(A)=I$ if $A$ is constant.
\end{lemma}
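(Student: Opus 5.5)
The whole lemma reduces to one weighted-degree bookkeeping identity for matrix products, which I will prove first. All claims are blockwise, so I work in a single block $\UT_{\ell+1}(\R)$ with weights $w_{ij}=w_i+\cdots+w_{j-1}$. The basic facts are additivity, $w_{ik}+w_{kj}=w_{ij}$, and positivity of all weights.

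\emph{Products.} For $A,B\in\Hpol$ and $i<j$,
\[
(AB)_{ij}=A_{ij}+B_{ij}+\sum_{i<k<j}A_{ik}B_{kj}.
\]
Each summand has degree at most $w_{ik}+w_{kj}=w_{ij}$, so $\Hpol$ is closed under multiplication. Extracting the coefficient of $t^{w_{ij}}$ from each term gives
\[
\tau(AB)_{ij}=\tau(A)_{ij}+\tau(B)_{ij}+\sum_{i<k<j}\tau(A)_{ik}\tau(B)_{kj}.
\]
This is exactly the $(i,j)$ entry of $\tau(A)\tau(B)$. The key point is that the coefficient of $t^{w_{ik}+w_{kj}}$ in $A_{ik}B_{kj}$ is the product of the top coefficients, because the degrees are bounded by $w_{ik}$ and $w_{kj}$ respectively.

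\emph{Dilation.} The same additivity shows $\delta_u(AB)=\delta_u(A)\,\delta_u(B)$: the factor $u^{w_{ij}}=u^{w_{ik}}u^{w_{kj}}$ distributes over each product term. Since $\delta_{u^{-1}}$ is a two-sided inverse, $\delta_u$ is a bijective homomorphism of $G$, hence an automorphism.

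\emph{Inverses.} Write $A=I+N$ with $N$ strictly upper triangular, so $N^{\ell+1}=0$ and
\[
A^{-1}=\sum_{r=0}^{\ell}(-N)^r.
\]
The $(i,j)$ entry of $N^r$ is a sum of products $N_{ik_1}N_{k_1k_2}\cdots N_{k_{r-1}j}$. Each such product has degree at most $w_{ik_1}+\cdots+w_{k_{r-1}j}=w_{ij}$ by additivity. Hence $A^{-1}\in\Hpol$. Alternatively, this follows by induction from $AA^{-1}=I$, solving for the entries of $A^{-1}$ along superdiagonals.

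\emph{Affine reparametrization.} Let $\tilde A(t)=A(b+qt)$. Each entry $\tilde A_{ij}$ is a polynomial of degree at most $w_{ij}$, since substituting $b+qt$ does not raise degree. Its $t^{w_{ij}}$ coefficient is $q^{w_{ij}}$ times the $t^{w_{ij}}$ coefficient of $A_{ij}$, because only the top-degree term contributes to that power. Thus $\tau(\tilde A)=\delta_q\tau(A)$. This holds for any real $q$, but $\delta_q$ was only defined for $q>0$, which matches the statement.

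\emph{Constants.} If $A$ is constant, every off-diagonal entry has degree $0$. Since every weight $w_{ij}$ is positive, the coefficient $[t^{w_{ij}}]A_{ij}$ vanishes, so $\tau(A)=I$.

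\emph{Main obstacle.} The only step needing care is the inversion closure, where one must see that the Neumann series respects the weight filtration. The telescoping weight sum along chains $i<k_1<\cdots<j$ handles this, so I expect no real difficulty. Everything else is direct coefficient bookkeeping.
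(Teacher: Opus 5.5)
Your proposal is correct and matches the paper's proof essentially step for step. In both, the entrywise product formula with $w_{ik}+w_{kj}=w_{ij}$ gives closure under products, multiplicativity of $\tau$ (only the top-degree terms survive), and the homomorphism property of $\delta_u$ with inverse $\delta_{1/u}$; the finite Neumann series handles inverses; the scalar identity $[t^w]f(b+qt)=q^w[t^w]f(t)$ gives the reparametrization rule; and positivity of the weights gives $\tau(A)=I$ for constant $A$, with your explicit check of closure under affine reparametrization simply spelling out what the paper leaves implicit.
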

\begin{proof}
For upper-unitriangular matrices,
\begin{equation}\label{eq:matrix-product}
 (AB)_{ij}=A_{ij}+B_{ij}+\sum_{i<k<j}A_{ik}B_{kj}.
\end{equation}
Additivity of the weights bounds the degree of every term by $w_{ij}$.
The inverse of $I+V$, with $V$ strictly upper triangular, is the finite
sum $I-V+V^2-\cdots$; every path of entries contributing to position
$(i,j)$ again has total weight $w_{ij}$. This proves the closure
assertions. The same additive identity proves that $\delta_u$ respects
multiplication; its inverse is $\delta_{1/u}$.

In a product $A_{ik}B_{kj}$, a term of degree
$w_{ij}=w_{ik}+w_{kj}$ can arise only from the terms of the respective
maximal allowed degrees. Taking this coefficient in
\eqref{eq:matrix-product} proves the first identity in
\eqref{eq:tau-algebra}. The second follows from the scalar identity
\[
 [t^w]f(b+qt)=q^w[t^w]f(t)\qquad(\deg f\leq w).
\]
Finally, constant entries contribute zero because their weights are
positive.
\end{proof}

For example, for adjacent weights $2,1$, the top-right weight is $3$.
In a product, its degree-three coefficient retains the cross term
$([t^2]A_{12})([t]B_{23})$. Thus $\tau$ is a matrix homomorphism, not
an approximation that discards the noncommutative terms. The scalar
identity
\begin{equation}\label{eq:scalar-extraction}
 [t^w]\bigl(\alpha(a+dt)^w\bigr)=\alpha d^w
\end{equation}
explains why this operation will eliminate the starting point $a$.

\subsection{Canonical representatives}
Put $I=[-1/2,1/2)$.
Every right coset $A\Gamma$ has a unique representative $\rep(A)$ whose $\nu$ off-diagonal entries lie in $I$.
To construct it, seek $\lambda\in\Gamma$ with $A\lambda\in I^\nu$
in these coordinates and process entries in increasing order of the gap
$j-i$. Once the shorter-gap entries are known, put
\begin{equation}\label{eq:canonical-reduction}
 f_{ij}=A_{ij}+\sum_{i<k<j}A_{ik}\lambda_{kj},\qquad
 \lambda_{ij}=-\nint{f_{ij}}.
\end{equation}
Then $(A\lambda)_{ij}\in I$, and this is the unique integer choice
with that property. Induction proves existence and uniqueness, and also
\begin{equation}\label{eq:rep-invariance}
 \rep(A\gamma)=\rep(A)\qquad(\gamma\in\Gamma).
\end{equation}
This representative need not depend continuously on $A$.

\subsection{Finite differences and integer-valued polynomials}\label{ssec:FiniteDifferences}
Write $\Delta_s f(n)=f(n+s)-f(n)$ and $\Delta = \Delta_1$.
We use
\begin{equation}\label{eq:diff-formula}
 \Delta_s^j f(n)=\sum_{v=0}^j(-1)^{j-v}\binom jv f(n+sv).
\end{equation}
A polynomial of degree at most $w$ has vanishing $(w+1)$st difference.
Conversely, a finite sequence of length at least $w+1$ whose
$(w+1)$st differences vanish agrees with a polynomial of degree at most
$w$: interpolate its first $w+1$ values, and use the difference equation
to determine each remaining value. Its Newton interpolation formula is
\begin{equation}\label{eq:newton}
 f(t)=\sum_{v=0}^w\Delta^v f(0)\binom tv.
\end{equation}
Consequently, a polynomial $f$ of degree at most $w$ that is
integer-valued at $0,\ldots,w$ is integer-valued on all of $\Z$, and
\begin{equation}\label{eq:integer-leading}
 w!\,[t^w]f(t)=\Delta^w f(0)\in\Z.
\end{equation}
Here $\binom nv$ is an integer even when $n<0$, by
$\binom{-u}{v}=(-1)^v\binom{u+v-1}{v}$ for $u\geq1$.
The polynomial $\binom t2=(t^2-t)/2$ shows why integer-valuedness does
not mean that the monomial coefficients are integers.

We need another lemma to deal with boundary effects from quotienting to the fundamental domain, so that we can reduce the problem to one where we have ordinary polynomials rather than generalized polynomials.

\begin{lemma}[Polynomial rounding]\label{lem:rounding}
Let $M\geq D+2$, and partition $I^\nu$ into boxes whose coordinate
intervals have length at most $\xi$, where
$2^{D+1}\xi<1$. If $A\in\Hpol$ and
$\rep(A(0)),\ldots,\rep(A(M-1))$ lie in one box, then there are
$Q,C\in\Hpol$ such that
\begin{equation}\label{eq:rounding}
 A(t)C(t)=Q(t)\quad(t\in\R),\qquad
 C(n)\in\Gamma\quad(n\in\Z),
\end{equation}
with $Q(n)=\rep(A(n))$ for $0\leq n<M$. Furthermore,
\begin{equation}\label{eq:rounding-factorial}
 w_{ij}!\,\tau(C)_{ij}\in\Z
\end{equation}
for every off-diagonal position.
\end{lemma}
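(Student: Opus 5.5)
The plan is to build $C$ entry by entry, in increasing order of the gap $j-i$, mimicking the canonical reduction \eqref{eq:canonical-reduction} but with polynomials in place of pointwise integer corrections. For $0\le n<M$ let $\lambda^{(n)}\in\Gamma$ be the unique element with $A(n)\lambda^{(n)}=\rep(A(n))$. We seek $C\in\Hpol$ with integer-valued entries satisfying $C(n)=\lambda^{(n)}$ for $0\le n<M$. Then $Q:=AC$ lies in $\Hpol$ by Lemma~\ref{lem:algebra}, and $Q(n)=\rep(A(n))$ for $0\le n<M$. Also $C(n)\in\Gamma$ for every $n\in\Z$, because each entry of $C$ is a polynomial of degree at most $w_{ij}<M$ that is integer-valued at $0,\dots,w_{ij}$, hence on all of $\Z$ by \eqref{eq:newton}. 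Finally, \eqref{eq:integer-leading} applied to $C_{ij}$ gives $w_{ij}!\,\tau(C)_{ij}=w_{ij}!\,[t^{w_{ij}}]C_{ij}=\Delta^{w_{ij}}C_{ij}(0)\in\Z$, which is \eqref{eq:rounding-factorial}.

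\textbf{Inductive step.} Suppose the entries $C_{kl}$ of gap smaller than $j-i$ are already polynomials of degree at most $w_{kl}$ with $C_{kl}(n)=\lambda^{(n)}_{kl}$ for $0\le n<M$. Define
\[
f_{ij}(t)=A_{ij}(t)+\sum_{i<k<j}A_{ik}(t)C_{kj}(t).
\]
By additivity of weights, $f_{ij}$ is a polynomial of degree at most $w:=w_{ij}$. By \eqref{eq:canonical-reduction}, $\lambda^{(n)}_{ij}=-\nint{f_{ij}(n)}$, and
\[
r(n):=f_{ij}(n)+\lambda^{(n)}_{ij}=\rep(A(n))_{ij}
\]
lies, for $0\le n<M$, in a fixed coordinate interval of length at most $\xi$, namely the $(i,j)$ side of the common box. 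Consider the integer sequence $e(n):=\lambda^{(n)}_{ij}=r(n)-f_{ij}(n)$ for $0\le n<M$.

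\textbf{Key estimate.} Since $\Delta^{w+1}f_{ij}\equiv0$, we have $\Delta^{w+1}e(n)=\Delta^{w+1}r(n)$ whenever $0\le n\le M-w-2$; this range is nonempty because $M\ge D+2$. By \eqref{eq:diff-formula} this is an alternating binomial combination of values of $r$. Its coefficients sum to $0$, so we may subtract the left endpoint of the interval from every value of $r$. The positive coefficients then sum to $2^{w}$, which bounds $|\Delta^{w+1}r(n)|\le 2^{w}\xi\le 2^{D+1}\xi<1$. Being an integer, $\Delta^{w+1}e(n)$ vanishes. By the converse statement in Section~\ref{ssec:FiniteDifferences}, $e$ agrees on $\{0,\dots,M-1\}$ with a polynomial of degree at most $w$, and we take $C_{ij}$ to be this polynomial. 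It is integer-valued at $0,\dots,w$ and has the right degree, which completes the induction. Since the recursion for $f_{ij}$ is exactly the matrix product formula \eqref{eq:matrix-product} restricted to positions already built, we get $Q_{ij}=f_{ij}+C_{ij}$, consistent with the above.

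The only genuinely delicate step is this finite-difference bound, which forces the integer corrections to be polynomial despite the discontinuity of $\rep$. One must check that the hypothesis $2^{D+1}\xi<1$ (with $w\le D$) suffices, and that $M\ge D+2$ gives at least one $(w+1)$st difference, so the interpolation criterion applies. Everything else is bookkeeping with Lemma~\ref{lem:algebra} and \eqref{eq:newton}.
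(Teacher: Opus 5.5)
Your proposal is correct and follows the paper's proof essentially step for step. You use the same gap-by-gap induction with $f_{ij}=A_{ij}+\sum_{i<k<j}A_{ik}C_{kj}$, the same argument that the $(w+1)$st difference is an integer of absolute value less than $1$ and so vanishes, and the same Newton-interpolation conclusion for integer-valuedness and the factorial condition. The differences are cosmetic: you interpolate the integer corrections $\lambda^{(n)}_{ij}$ and then set $Q=AC$, whereas the paper interpolates $\rep(A(n))_{ij}$ and sets $C_{ij}=Q_{ij}-f_{ij}$; and the zero-sum of the binomial coefficients gives you the slightly sharper bound $2^{w}\xi$ in place of $2^{w+1}\xi$.
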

\begin{proof}
On the initial samples write
$A(n)\lambda(n)=r(n)=\rep(A(n))$, with $\lambda(n)\in\Gamma$.
We construct $C_{ij}$ and $Q_{ij}$ in increasing order of the gap,
maintaining $C_{ij}(n)=\lambda(n)_{ij}$ and $Q_{ij}(n)=r_{ij}(n)$
for $0\leq n<M$.

For the current position $(i,j)$ put $w=w_{ij}$ and
\[
 f_{ij}(t)=A_{ij}(t)+\sum_{i<k<j}A_{ik}(t)C_{kj}(t).
\]
The entries in the sum have already been constructed.
Their weighted degree bounds give $\deg f_{ij}\leq w$, while the induction hypothesis
and \eqref{eq:matrix-product} give
\[
 r_{ij}(n)-f_{ij}(n)=\lambda(n)_{ij}\in\Z
 \qquad(0\leq n<M).
\]
Thus $\Delta^{w+1}r_{ij}(n)$ is an integer wherever it is defined.
All the $r_{ij}(n)$ lie in an interval of length at most $\xi$.
Subtracting an endpoint of that interval does not change a positive-order
difference. Hence \eqref{eq:diff-formula} yields
\[
 |\Delta^{w+1}r_{ij}(n)|\leq2^{w+1}\xi<1.
\]
These integer differences therefore vanish exactly.

The finite interpolation observation gives a polynomial $Q_{ij}$ of
degree at most $w$ agreeing with all the $r_{ij}(n)$. Set
$C_{ij}=Q_{ij}-f_{ij}$. It has degree at most $w$ and is integer-valued
at $0,\ldots,w$. Formula~\eqref{eq:newton} makes it integer-valued at
every integer and gives \eqref{eq:rounding-factorial}. The equation
defining $C_{ij}$ is precisely the $(i,j)$ entry of $AC=Q$.
Induction through all gaps and blocks completes the construction.
\end{proof}

The identity in \eqref{eq:rounding} holds on the whole real line, and
$C(n)$ belongs to $\Gamma$ at every integer. Only the first $M$ samples
of $Q$, however, are known to be canonical. Proposition~\ref{prop:matrix-obstruction} will control the remaining samples by a different argument.

\subsection{A continuous size function}
Define
\begin{equation}\label{eq:shell}
 F(A)=\min_{\gamma\in\Gamma}\sum_{i<j}(A\gamma)_{ij}^{\,2},
 \qquad A\in G.
\end{equation}
The sum ranges over all off-diagonal entries in every block.

\begin{lemma}\label{lem:shell}
The minimum in \eqref{eq:shell} is attained. The function $F$ is
continuous, satisfies $0\leq F(A)\leq\nu/4$, and is invariant under
right multiplication by $\Gamma$. On every compact set of matrices,
only finitely many elements of $\Gamma$ can occur as minimizers.
\end{lemma}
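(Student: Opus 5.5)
The plan is to get every assertion of Lemma~\ref{lem:shell} from one coercivity estimate: off-diagonal entries of $A\gamma$ that are large force the sum of squares to be large. The bound $F\le\nu/4$ and right $\Gamma$-invariance come for free. Taking $\gamma=\lambda$ from \eqref{eq:canonical-reduction}, so that $A\lambda=\rep(A)$ has all $\nu$ off-diagonal entries in $I=[-1/2,1/2)$, gives a competitor with value at most $\nu/4$. Invariance holds because $\gamma\mapsto\gamma_0\gamma$ is a bijection of $\Gamma$, so $F(A\gamma_0)$ is the same minimum over the same set of matrices $\{A\gamma\}$.

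For attainment and finiteness, fix a compact set $\mathcal C\subseteq G$. Its off-diagonal entries are bounded by some $R$. I would show that the set
\[
\Lambda(\mathcal C)=\Bigl\{\gamma\in\Gamma:\ \textstyle\sum_{i<j}(A\gamma)_{ij}^2\le\nu/4\text{ for some }A\in\mathcal C\Bigr\}
\]
is finite. Suppose $\gamma\in\Lambda(\mathcal C)$, with witness $A$, and put $B=A\gamma$. Then $|B_{ij}|\le\sqrt{\nu}/2$ for all $i<j$. Since $\gamma=A^{-1}B$, the closure of unitriangular matrices under inversion and products (formula \eqref{eq:matrix-product}) bounds each entry of $\gamma$. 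Induction on the gap $j-i$ gives $|\gamma_{ij}|\le c(R,\nu)$, because $\gamma_{ij}=B_{ij}-A_{ij}-\sum_{i<k<j}A_{ik}\gamma_{kj}$ involves only shorter-gap entries of $\gamma$. Integer matrices with bounded entries form a finite set, so $\Lambda(\mathcal C)$ is finite.

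Any minimizer for $A\in\mathcal C$ has value at most $\nu/4$ by the first step, so it lies in $\Lambda(\mathcal C)$; this is the finiteness claim. For a single $A$, take $\mathcal C=\{A\}$. The infimum is then a minimum over a finite nonempty set (it contains $\lambda$), hence attained.

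Continuity is next. Let $\mathcal C$ be a compact neighbourhood of a given $A_0$. On $\mathcal C$ the function $F$ equals $\min_{\gamma\in\Lambda(\mathcal C)}\sum_{i<j}(A\gamma)_{ij}^2$. This is valid because every minimizer at every point of $\mathcal C$ lies in $\Lambda(\mathcal C)$, and restricting the minimum to a subset containing a minimizer does not change its value. Each term is a polynomial in the entries of $A$, and a minimum of finitely many continuous functions is continuous. So $F$ is continuous near $A_0$, hence everywhere.

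The main obstacle is the quantitative inversion bound in the finiteness step. The subtle point is choosing the threshold defining $\Lambda(\mathcal C)$ uniformly. Because $\nu/4$ dominates every value of $F$, this threshold is uniform, and that uniformity is what makes the same finite set of $\gamma$ work simultaneously for all $A$ in a neighbourhood.
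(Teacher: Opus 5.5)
Your proposal is correct and follows essentially the same route as the paper. The canonical representative gives the $\nu/4$ bound; boundedness of $A\gamma$ over a compact set forces $\gamma=A^{-1}(A\gamma)$ into a finite set of integer matrices; and this yields attainment, the finite set of minimizers, and continuity as a local minimum of finitely many continuous functions, with your gap-induction bound on $\gamma_{ij}$ simply spelling out a step the paper states in one line.
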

\begin{proof}
The canonical representative gives a candidate with squared-coordinate
sum at most $\nu/4$. If $A$ ranges over a fixed compact set and
$\sum(A\gamma)_{ij}^2\leq\nu/4$, then $A\gamma$ lies in a bounded
coordinate box. The matrices $\gamma=A^{-1}(A\gamma)$ are therefore
bounded as well. Only finitely many integer matrices have this property.
Thus the minimum is attained, with a common finite set of candidates on
each compact set. Locally $F$ is consequently the minimum of finitely
many continuous functions. Right-lattice invariance is immediate from the definition.
\end{proof}

\begin{lemma}\label{lem:no-flat}
If $Q:\R\to G$ is a nonconstant map with ordinary polynomial entries,
then $F(Q(t))$ is not constant on any nondegenerate real interval.
\end{lemma}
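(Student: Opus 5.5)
The plan is to reduce $F\circ Q$ on a compact interval to a minimum of finitely many polynomials, and then use the rigidity of sums of squares of real polynomials.

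Suppose, for contradiction, that $F(Q(t))=c$ for all $t$ in some interval $[a,b]$ with $a<b$. Since $Q$ has polynomial entries, it is continuous, so $Q([a,b])$ is a compact set of matrices. By Lemma~\ref{lem:shell}, only finitely many elements $\gamma_1,\ldots,\gamma_N\in\Gamma$ occur as minimizers in \eqref{eq:shell} for points of this compact set. For each $\ell$ put
\[
 p_\ell(t)=\sum_{i<j}\bigl(Q(t)\gamma_\ell\bigr)_{ij}^{\,2}.
\]
Since $\gamma_\ell$ is a constant matrix, each entry $(Q(t)\gamma_\ell)_{ij}$ is an ordinary real polynomial in $t$. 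Hence each $p_\ell$ is a real polynomial, and on $[a,b]$
\[
 F(Q(t))=\min_{1\le \ell\le N}p_\ell(t).
\]

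For each $t\in[a,b]$ some $\ell$ satisfies $p_\ell(t)=c$. Since $[a,b]$ is infinite and there are only $N$ indices, the pigeonhole principle gives a single $\ell$ with $p_\ell(t)=c$ for infinitely many $t$. A polynomial agreeing with a constant at infinitely many points is that constant, so $p_\ell\equiv c$ identically on $\R$.

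The key step is that a sum of squares of real polynomials is constant only when every summand is constant. Let $e$ be the largest degree among the polynomials $q_{ij}(t)=(Q(t)\gamma_\ell)_{ij}$, and suppose $e\geq1$. The coefficient of $t^{2e}$ in $\sum q_{ij}^2$ is the sum of the squares of the leading coefficients of those $q_{ij}$ that have degree exactly $e$. This is strictly positive, so no cancellation can occur, and $p_\ell$ has degree $2e>0$. That contradicts $p_\ell\equiv c$. Hence every entry of $Q(t)\gamma_\ell$ is constant, say $Q(t)\gamma_\ell=B$ for all $t$. Then $Q(t)=B\gamma_\ell^{-1}$ is constant, contradicting the hypothesis that $Q$ is nonconstant.

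The only point requiring care is the passage from ``the minimum is locally one of finitely many continuous functions'' to ``one fixed $p_\ell$ is constant''. This works because the finiteness of minimizers in Lemma~\ref{lem:shell} holds uniformly on the whole compact set $Q([a,b])$, so the pigeonhole argument can be applied over the entire interval.
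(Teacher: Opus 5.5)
Your proof is correct and follows essentially the same route as the paper: you reduce $F\circ Q$ on a compact interval to a minimum of finitely many polynomials $\sum_{i<j}(Q(t)\gamma_\ell)_{ij}^2$ using Lemma~\ref{lem:shell}, and you use the positivity of the top coefficient of a sum of squares. The only difference is the order of the steps. You use pigeonhole to force one such polynomial to be identically $c$ and then deduce that $Q$ is constant, whereas the paper first shows each polynomial is nonconstant and then notes that finitely many finite solution sets cannot cover the interval.
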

\begin{proof}
It is enough to consider a compact nondegenerate interval. Its image
admits only finitely many minimizing matrices $\gamma_1,\ldots,\gamma_s$.
For each of them,
\[
 f_a(t)=\sum_{i<j}(Q(t)\gamma_a)_{ij}^{\,2}
\]
is a nonconstant polynomial. Indeed, $Q(t)\gamma_a$ is nonconstant
because multiplication by $\gamma_a$ is invertible. If $b\geq1$ is
the maximal degree of its coordinates, then the coefficient of $t^{2b}$
in $f_a$ is the sum of the squares of their degree-$b$ coefficients
and is strictly positive.
If $F(Q(t))$ were a constant $c$, the finitely many equations
$f_a(t)=c$ would cover the interval. Each has only finitely many solutions,
which is impossible.
\end{proof}

\subsection{Uniform oscillation and all-color sampling}
Fix the escape radius $\rho=1/4$.
Let $\Kpol$ be the family of polynomial matrix curves
$Q:[0,1]\to G$ whose coordinate degrees are at most $D$ and satisfy
\begin{equation}\label{eq:normalized}
 \max_{0\leq t\leq1}\coord{Q(t)}\leq1,
 \qquad \coord{Q(1)-Q(0)}=\rho.
\end{equation}
For this auxiliary family we need only the common degree bound $D$.
The family is nonempty: set one coordinate equal to $\rho t$ and all
others to zero.

Since the coordinate degrees are bounded by \(D\), \(\mathcal K\) lies in a finite-dimensional space of polynomial curves.
The bound \(\max_{0\le t\le1}\coord{Q(t)}\le1\) bounds all coefficients by interpolation, while the defining conditions are closed; hence \(\mathcal K\) is compact.
The function
\[
 Q\longmapsto\osc_{[0,1]}F(Q(t)) := \sup_{x,y \in [0,1]}|F(Q(x))-F(Q(y))|
\]
is continuous on this compact family. Every member is nonconstant, so
Lemma~\ref{lem:no-flat} gives a constant $\eta>0$, depending only on
$G,D$, such that
\begin{equation}\label{eq:uniform-osc}
 \osc_{[0,1]}F(Q(t))\geq\eta\qquad(Q\in\Kpol).
\end{equation}

Fix the desired number $r\geq2$ of primary colors and put
\begin{equation}\label{eq:kappa}
 h=\frac{\eta}{20r},\qquad
 \kappa(A)=\left\lfloor\frac{F(A)}h\right\rfloor\pmod r,
\end{equation}
with values in $\{0,\ldots,r-1\}$. Consecutive shells of width $h$
cycle through all colors. Invariance of $F$ gives
\begin{equation}\label{eq:kappa-invariance}
 \kappa(A\gamma)=\kappa(A)\qquad(\gamma\in\Gamma).
\end{equation}

The following lemma replaces the density argument from
the Bohr case. There, sufficiently fine samples along a line segment
meet the interiors of the shells it crosses. Here we work with
polynomial curves of bounded degree. The preceding oscillation
bound ensures that every curve in $\mathcal K$ crosses shells of all
$r$ colors, and the compactness of $\mathcal K$ ensures that a sufficiently
fine parameter mesh, chosen uniformly over $\mathcal K$,
samples every color.
By sampling near shell midpoints, we avoid boundary issues.

\begin{lemma}[Uniform sampling]\label{lem:sampling}
There exists $T_0\geq1$, depending only on $G,D,r$, such that for every
$Q\in\Kpol$ and every real $T\geq T_0$, the samples
\begin{equation}\label{eq:mesh-samples}
 Q(j/T),\qquad 0\leq j\leq\lfloor T\rfloor,
\end{equation}
receive all $r$ colors under $\kappa$.
\end{lemma}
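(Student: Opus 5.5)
The plan is to combine the uniform oscillation bound \eqref{eq:uniform-osc} with uniform equicontinuity of the family $\Kpol$ and of $F$. Since $\Kpol$ is a compact family of polynomial curves of degree at most $D$ with coefficients bounded by interpolation, the derivatives of all coordinates are bounded by some constant $\Lambda=\Lambda(G,D)$ on $[0,1]$. So every $Q\in\Kpol$ is $\Lambda$-Lipschitz in the coordinate distance $\coord{\cdot}$. The images $Q([0,1])$ all lie in the compact coordinate box $\{\coord{A}\le1\}$. By Lemma~\ref{lem:shell}, $F$ is continuous there, hence uniformly continuous. Fix $\theta>0$ such that $\coord{A-B}\le\theta$ and both $A,B$ in the box imply $|F(A)-F(B)|\le h/2$; here $h=\eta/(20r)$ as in \eqref{eq:kappa}.

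Next, given $Q\in\Kpol$, the continuous function $\phi(t)=F(Q(t))$ on $[0,1]$ has oscillation at least $\eta=20rh$. By the intermediate value theorem, $\phi([0,1])$ contains an interval $[\phi_{\min},\phi_{\max}]$ of length at least $20rh$. It therefore contains at least $r$ consecutive full shells $[\ell h,(\ell+1)h]$, which between them carry all residues of $\ell$ modulo $r$; for instance, take $r$ consecutive $\ell$ with $[\ell h,(\ell+r)h]$ well inside the range. For each such $\ell$, choose $t_\ell\in[0,1]$ with $\phi(t_\ell)=(\ell+\tfrac12)h$, the shell midpoint. Any $t$ with $|\phi(t)-\phi(t_\ell)|<h/2$ then has $\lfloor\phi(t)/h\rfloor=\ell$, so $\kappa(Q(t))\equiv\ell\pmod r$.

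Finally, choose $T_0=\max(1,\,2\Lambda/\theta)$. For $T\ge T_0$, the mesh $\{j/T:0\le j\le\lfloor T\rfloor\}$ is $1/T$-dense in $[0,1]$. This holds because the last point $\lfloor T\rfloor/T$ is within $1/T$ of $1$. Hence for each $t_\ell$ there is a mesh point $j/T$ with $|j/T-t_\ell|\le1/T\le\theta/(2\Lambda)$. This gives $\coord{Q(j/T)-Q(t_\ell)}\le\theta/2<\theta$, and so $|\phi(j/T)-\phi(t_\ell)|\le h/2$.

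To get a strict inequality, shrink $\theta$ so that the bound becomes $h/4$. Then $Q(j/T)$ receives color $\ell\bmod r$, and running over the $r$ consecutive values of $\ell$ yields all $r$ colors. All constants $\Lambda,\theta,\eta,h$ depend only on $G$, $D$ and $r$, so $T_0$ does too.

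The main point needing care is the uniform Lipschitz bound $\Lambda$. It follows because, on the finite-dimensional space of degree-$\le D$ polynomials, the sup norm on $[0,1]$ and the sup norm of the derivative are comparable, by equivalence of norms. The rest is routine.
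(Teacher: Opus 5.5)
Your proof is correct and follows essentially the same route as the paper. In both, you pick shell midpoints inside the range of $F\circ Q$ (an interval of length at least $\eta=20rh$), hit them via the intermediate value theorem, and use a modulus of continuity that is uniform over $\Kpol$ to keep a nearby mesh point strictly inside each band. The only difference is cosmetic: the paper gets this modulus directly from uniform continuity of $(Q,t)\mapsto F(Q(t))$ on the compact set $\Kpol\times[0,1]$, whereas you factor it through a uniform Lipschitz bound on the curves and uniform continuity of $F$ on the box $\{\coord{A}\le1\}$.
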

\begin{proof}
The range of $F\circ Q$ is a closed interval $[a,b]$ of length at least
$\eta=20rh$. With $j_0=\lceil a/h\rceil$, the $r$ midpoints
\[
 z_s=(j_0+s+\tfrac12)h,\qquad0\leq s<r,
\]
all belong to $[a,b]$. Their bands have distinct colors, and each
midpoint is distance $h/2$ from the corresponding band boundaries.
Choose $u_s\in[0,1]$ with $F(Q(u_s))=z_s$.

The map $(Q,t)\mapsto F(Q(t))$ is uniformly continuous on the compact
set $\Kpol\times[0,1]$. There is therefore $\theta>0$, independent of
$Q$, such that $|t-u|<\theta$ implies
$|F(Q(t))-F(Q(u))|<h/4$. Choose $T_0>1/\theta$ and approximate $u_s$
by $\lfloor Tu_s\rfloor/T$. The corresponding sampled value remains
strictly inside the band centered at $z_s$. This samples every color,
also when $T$ is not an integer.
\end{proof}

\subsection{Interpolation and the auxiliary parameters}
We need two elementary polynomial estimates. The first says that
a polynomial of bounded degree which is bounded at sufficiently many
consecutive integers varies only slightly on a fixed initial interval.
This ensures that any excursion of our polynomial lift takes long
enough for the integer samples to detect all colors. The second says
that a polynomial which remains bounded on a long real interval has
a small top-degree coefficient.
\begin{lemma}\label{lem:interpolation}
\begin{enumerate}
\item For fixed $D,T_0$ and $\rho>0$, there is an integer $M\geq D+2$
such that every real polynomial $f$ of degree at most $D$ with
$|f(n)|\leq1/2$ for $0\leq n<M$ satisfies
\begin{equation}\label{eq:initial-motion}
 |f(t)-f(0)|<\rho\qquad(0\leq t\leq T_0).
\end{equation}
\item If $\deg f\leq w$, $w\geq1$, and $|f(t)|\leq B$ on $[0,J]$,
where $J>0$, then
\begin{equation}\label{eq:coefficient-bound}
 |[t^w]f(t)|\leq b_w B J^{-w},\qquad
 b_w=\frac{2^ww^w}{w!}.
\end{equation}
\end{enumerate}
\end{lemma}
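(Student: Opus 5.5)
Both parts rest on the fact that polynomials of degree at most $D$ form a finite-dimensional space, on which any two norms are equivalent. For part (i) we argue by compactness, or equivalently by Lagrange interpolation. For part (ii) we rescale to $[0,1]$ and bound the leading coefficient by interpolating at Chebyshev-type or equally spaced nodes.

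\emph{Part (i).} Write $f$ in the Lagrange basis at the nodes $0,1,\ldots,D$:
\[
 f(t)=\sum_{n=0}^{D}f(n)\ell_n(t).
\]
This alone only bounds $f$ by a constant on $[0,T_0]$, which is not enough. We need \emph{small} motion, and the mechanism is that boundedness at many integer points forces the higher coefficients to be small. Apply part (ii), in its discrete analogue, on the long window $[0,M-1]$. For $1\le w\le D$ we expect $|[t^w]f|\ll_D M^{-w}$. The discrete analogue follows by expressing the coefficients through finite differences at spacing $s\approx M/D$, using \eqref{eq:diff-formula}:
\[
 \Delta_s^v f(0)=\sum_{u=0}^{v}(-1)^{v-u}\binom vu f(su),
\]
which is bounded by $2^v\cdot\tfrac12$. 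Newton interpolation \eqref{eq:newton} with step $s$ then expresses $f(0+s x)$ as $\sum_v \Delta_s^v f(0)\binom{x}{v}$. For $t\in[0,T_0]$ we have $x=t/s\le T_0D/M$, and for $v\ge1$ each term has $|\binom{x}{v}|\le x(1+x)^{v-1}$. Hence
\[
 |f(t)-f(0)|\le\sum_{v=1}^{D}2^{v-1}\cdot\frac{T_0D}{M}\bigl(1+T_0D/M\bigr)^{v-1}=O_{D}(T_0/M),
\]
which is $<\rho$ once $M$ is large in terms of $D,T_0,\rho$. We must take $M-1\ge sD$, and we take $M\ge D+2$ as well.

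\emph{Part (ii).} Substitute $g(x)=f(Jx)$ on $[0,1]$, so that $[x^w]g=J^w[t^w]f$ and $|g|\le B$ on $[0,1]$. It suffices to show $|[x^w]g|\le \frac{2^ww^w}{w!}B$. Use the equally spaced nodes $x_u=u/w$ for $0\le u\le w$. By Newton's formula, $[x^w]g$ equals the $w$th divided difference, so
\[
 [x^w]g=\frac{w^w}{w!}\,\Delta_{1/w}^w g(0)=\frac{w^w}{w!}\sum_{u=0}^{w}(-1)^{w-u}\binom wu g(u/w).
\]
By the triangle inequality this is at most $\frac{w^w}{w!}2^wB$, which is exactly $b_w$. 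This computation is routine.

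The only real obstacle is part (i), where we must see that pointwise boundedness at integers yields \emph{smallness} of the motion near $0$. Plain compactness gives only boundedness. The resolution is the spaced-difference argument above, which uses the length $M$ of the window to shrink every non-constant Newton term.
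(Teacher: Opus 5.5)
Your proof is correct and takes essentially the paper's route. Part (ii) is the same spaced-difference computation with step $J/w$. In part (i), your Newton expansion at a large step $s$ plays the role of the paper's Lagrange interpolation at the nodes $0,H,\ldots,DH$ with $M=DH+1$; both give $|f(t)-f(0)|=O_D(T_0/H)$.

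One small fix: $s$ must be a positive integer with $sD\leq M-1$ (for example, take $M=Ds+1$). Then $x=t/s\leq T_0/s=T_0D/(M-1)$ rather than $T_0D/M$, which changes nothing in the conclusion.
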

\begin{proof}
For \textup{(i)}, let
\[
 \lambda_j(u)
 =
 \prod_{\substack{0\leq m\leq D\\m\neq j}}
 \frac{u-m}{j-m},
 \qquad 0\leq j\leq D,
\]
be the Lagrange basis for the nodes \(0,\ldots,D\), and choose
\[
 B_D>\frac12\sum_{j=0}^D\sup_{0\leq u\leq1}|\lambda_j'(u)|.
\]
Choose an integer
\[
 H>\max\{1,T_0,B_DT_0/\rho\},
 \qquad M=DH+1.
\]
If \(\deg f\leq D\) and \(|f(n)|\leq1/2\) for \(0\leq n<M\), then
Lagrange interpolation applied to \(\widetilde f(u)=f(Hu)\) gives
\[
 \widetilde f(u)=\sum_{j=0}^D f(jH)\lambda_j(u),
 \qquad
 \sup_{0\leq u\leq1}|\widetilde f'(u)|\leq B_D.
\]
Hence, for \(0\leq t\leq T_0\),
\[
 |f(t)-f(0)|
 =
 \left|\widetilde f(t/H)-\widetilde f(0)\right|
 \leq \frac{B_DT_0}{H}<\rho.
\]

For \textup{(ii)}, put \(s=J/w\). By the formula for an iterated
finite difference,
\[
 \Delta_s^w f(0)
 =
 \sum_{j=0}^w(-1)^{w-j}\binom{w}{j}f(js),
\]
and therefore
\[
 |\Delta_s^w f(0)|\leq 2^wB.
\]
On the other hand, \(\Delta_s^w\) annihilates every polynomial of
degree less than \(w\), while
\[
 \Delta_s^w(t^w)=w!s^w.
\]
Consequently,
\[
 \Delta_s^w f(0)=w!s^w[t^w]f(t),
\]
and hence
\[
 |[t^w]f(t)|
 \leq \frac{2^wB}{w!s^w}
 =\frac{2^ww^w}{w!}\,BJ^{-w}
 =b_wBJ^{-w}.
\]
\end{proof}

Choose $M$ from Lemma~\ref{lem:interpolation}(i) for the already fixed
$D,T_0,\rho$. Set $\xi=2^{-D-3}$ and partition the canonical box
$I^\nu$ into $R=2^{(D+3)\nu}$ half-open boxes of side length $\xi$.
The label of the box containing $\rep(A)$ is an \emph{auxiliary}
$R$-coloring; it is distinct from the primary $r$-coloring $\kappa$.
Finite van der Waerden \cite{vdW} supplies $W=W(R,M)$ such that every
$R$-coloring of $\{0,\ldots,W-1\}$ has a monochromatic $M$-term
progression. Finally set
\begin{equation}\label{eq:fixed-L}
 L=D!\,\lcm(1,2,\ldots,W).
\end{equation}
The order of choices is
\begin{equation}\label{eq:choice-order}
 (\text{weight patterns},r)\longrightarrow
 G,\Gamma,D,\eta,h,\kappa,T_0\longrightarrow M,\xi,R,W\longrightarrow L.
\end{equation}
These objects are fixed before any coefficients or any progression are
given. The auxiliary coloring only finds a short subprogression; it does
not impose an $R$-vdW-largeness hypothesis on the allowed differences.

\subsection{Control of the leading coefficients}
\begin{proposition}\label{prop:matrix-obstruction}
Let $p\in\Hpol$ and $k\geq4W+2$. If
$\{\kappa(p(j)):0\leq j<k\}$ omits a color, there are
$\gamma\in\Gamma$ and $E\in G$ such that
\begin{equation}\label{eq:leading-matrix}
 \delta_L\tau(p)\,\gamma=E,
 \qquad
 |E_{ij}|\leq\frac34b_{w_{ij}}(2L)^{w_{ij}}k^{-w_{ij}}.
\end{equation}
\end{proposition}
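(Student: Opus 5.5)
Following the outline in Section~\ref{ssec:Outline}, I would first apply the auxiliary $R$-coloring (box label of $\rep(p(j))$) to the indices $0\le j<W$ and use finite van der Waerden to get an $M$-term progression $b,b+q,\ldots,b+(M-1)q$. Its representatives lie in one box, and $b<W$, $1\le q<W$. The map $A(t):=p(b+qt)$ lies in $\Hpol$ by Lemma~\ref{lem:algebra}. Since $2^{D+1}\xi=1/4<1$ and $M\ge D+2$, Lemma~\ref{lem:rounding} gives $Q,C\in\Hpol$ with $A C=Q$, $C(n)\in\Gamma$ at integers, $Q(n)=\rep(A(n))$ for $0\le n<M$, and $w_{ij}!\,\tau(C)_{ij}\in\Z$. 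Put $J=\lfloor (k-1-b)/q\rfloor$. Because $k\ge 4W+2$, we get $J\ge (k-1-W)/W-1$, so $J\ge k/(2W)$ up to harmless constants. By \eqref{eq:kappa-invariance}, $\kappa(Q(n))=\kappa(p(b+qn))$ for all integers $0\le n\le J$, so these values omit a color.

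The main step, and the one I expect to be the obstacle, is the first-exit argument showing that $\coord{Q(t)-Q(0)}<\rho$ on $[0,J]$. Suppose not, and let $T\le J$ be the first time with $\coord{Q(T)-Q(0)}=\rho$. Each coordinate of $Q$ is bounded by $1/2$ at $0,\ldots,M-1$, so Lemma~\ref{lem:interpolation}(i) gives $T>T_0$. Set $\widetilde Q(u)=Q(Tu)$ on $[0,1]$. Then $\coord{\widetilde Q(u)}\le 1/2+\rho\le 1$, $\coord{\widetilde Q(1)-\widetilde Q(0)}=\rho$, and the degrees are at most $D$, so $\widetilde Q\in\Kpol$. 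Lemma~\ref{lem:sampling} then says that the samples $\widetilde Q(j/T)=Q(j)$ for $0\le j\le\lfloor T\rfloor\le J$ receive all $r$ colors, which is a contradiction.

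Hence every coordinate satisfies $|Q_{ij}(t)|\le 1/2+\rho=3/4$ on $[0,J]$. Lemma~\ref{lem:interpolation}(ii) then gives
\[|\tau(Q)_{ij}|\le \tfrac34 b_{w_{ij}}J^{-w_{ij}}.\]

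For the algebraic finish, apply $\tau$ to $AC=Q$. Lemma~\ref{lem:algebra} gives $\delta_q\tau(p)\,\tau(C)=\tau(Q)$. Now apply the automorphism $\delta_{L/q}$:
\[\delta_L\tau(p)\,\delta_{L/q}\tau(C)=\delta_{L/q}\tau(Q)=:E.\]
The entries of $\gamma:=\delta_{L/q}\tau(C)$ are $(L/q)^{w_{ij}}\tau(C)_{ij}$. Since $q\le W$ divides $\lcm(1,\ldots,W)$ and $w_{ij}!$ divides $D!$, the factor $L/q$ is an integer divisible by $w_{ij}!$. Thus $\gamma\in\Gamma$.

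Finally,
\[|E_{ij}|\le (L/q)^{w_{ij}}\cdot\tfrac34 b_{w_{ij}}J^{-w_{ij}}.\]
The bound $qJ\ge k-1-b-q\ge k/2$ holds because $b,q<W$ and $k\ge4W+2$. This gives $L/(qJ)\le 2L/k$, which is exactly \eqref{eq:leading-matrix}. The delicate points to check are that the first exit time is well defined (by continuity) and that $\lfloor T\rfloor\le J$, so that the sampled indices stay inside the progression.
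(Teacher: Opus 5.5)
Your proposal is correct and follows the paper's proof essentially step for step. The steps are the same: van der Waerden on the auxiliary box coloring, the rounding lemma applied to $p(b+qt)$, the real first-exit argument via Lemma~\ref{lem:interpolation}(i) and Lemma~\ref{lem:sampling}, the coefficient bound from Lemma~\ref{lem:interpolation}(ii), clearing denominators with $\delta_{L/q}$, and finally $qJ\geq k-1-b-q\geq k/2$. Your aside that $J\geq k/(2W)$ ``up to harmless constants'' is never used; all that is needed is $J\geq1$, which follows from $k\geq4W+2$.
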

\begin{proof}
Let $e$ be a color omitted by these samples. Apply the auxiliary grid
coloring to $p(0),\ldots,p(W-1)$. The choice of $W$ gives $b\geq0$
and $q\geq1$ with
\[
 b+(M-1)q\leq W-1
\]
such that the canonical representatives of
$p(b),p(b+q),\ldots,p(b+(M-1)q)$ lie in one grid box. In particular,
$b,q\leq W$. The rounding lemma, applied to $p(b+qt)$, gives
\begin{equation}\label{eq:polynomial-lift}
 p(b+qt)C(t)=Q(t),\qquad C(\Z)\subseteq\Gamma,
\end{equation}
where $Q,C\in\Hpol$ and $Q(0),\ldots,Q(M-1)$ are canonical.
Put
\[
 J=\left\lfloor\frac{k-1-b}{q}\right\rfloor\geq M-1.
\]
For all integers $0\leq j\leq J$, right-lattice invariance gives
\begin{equation}\label{eq:omitted-lift}
 \kappa(Q(j))=\kappa(p(b+qj))\neq e.
\end{equation}

We claim that
\begin{equation}\label{eq:no-escape}
 \coord{Q(t)-Q(0)}<\rho\qquad(0\leq t\leq J).
\end{equation}
The first $M$ samples have all coordinates bounded by $1/2$.
Lemma~\ref{lem:interpolation}(i) therefore gives this displacement bound
up to time $T_0$. If \eqref{eq:no-escape} failed, continuity would give
a first real time $T\leq J$ when the displacement equals $\rho$.
Necessarily $T>T_0$. Up to that time the coordinates of $Q$ are bounded
by $1/2+\rho=3/4$, since $Q(0)$ is canonical. The rescaled curve
$\widetilde Q(u)=Q(Tu)$ belongs to $\Kpol$.
Lemma~\ref{lem:sampling}, with mesh parameter $T$, says that the samples
\[
 \widetilde Q(j/T)=Q(j),\qquad0\leq j\leq\lfloor T\rfloor,
\]
receive all $r$ colors.
These samples occur in \eqref{eq:omitted-lift}, a contradiction.
This proves \eqref{eq:no-escape}.

Every coordinate of $Q$ is now bounded by $3/4$ on the entire real
interval $[0,J]$. Its individual degree bound and
Lemma~\ref{lem:interpolation}(ii) give
\begin{equation}\label{eq:tau-Q-small}
 |\tau(Q)_{ij}|\leq\frac34b_{w_{ij}}J^{-w_{ij}}.
\end{equation}
Applying $\tau$ to \eqref{eq:polynomial-lift} yields the exact identity
\begin{equation}\label{eq:extracted-lift}
 (\delta_q\tau(p))\tau(C)=\tau(Q).
\end{equation}
The matrix $\tau(C)$ need not belong to $\Gamma$, even though $C(n)$
does for every integer $n$. We clear its factorial denominators at the
same time as we remove the auxiliary step $q$.
Apply $\delta_{L/q}$ to \eqref{eq:extracted-lift} and define
\[
 \gamma=\delta_{L/q}\tau(C),\qquad E=\delta_{L/q}\tau(Q).
\]
By \eqref{eq:fixed-L}, $L/q$ is an integer multiple of $D!$.
Writing $\tau(C)_{ij}=z_{ij}/w_{ij}!$ with $z_{ij}\in\Z$, as permitted
by \eqref{eq:rounding-factorial}, gives
\begin{equation}\label{eq:clearance}
 \gamma_{ij}=\frac{(L/q)^{w_{ij}}}{w_{ij}!}\,z_{ij}\in\Z.
\end{equation}
Thus $\gamma\in\Gamma$ and $\delta_L\tau(p)\gamma=E$.
Moreover, \eqref{eq:tau-Q-small} yields
\[
 |E_{ij}|\leq\frac34 b_{w_{ij}}\left(\frac{L}{qJ}\right)^{w_{ij}}.
\]
Finally,
$qJ\geq k-1-b-q\geq k-1-2W\geq k/2$, which proves
\eqref{eq:leading-matrix}.
\end{proof}

Only the short initial subprogression lies in one auxiliary grid box.
Boundedness on the long real interval is a consequence of the primary
omitted-color hypothesis. The use of the actual real first-exit time
is essential: an exit need not occur at an integer.

\subsection{Recovering the special generalized polynomials} \label{ssec:recovery}

For real $x_1,\ldots,x_\ell$, let $B(x_1,\ldots,x_\ell)$ be the
upper-unitriangular matrix whose first superdiagonal is
\begin{equation}\label{eq:band-signs}
 -x_1,-x_2,\ldots,-x_{\ell-1},+x_\ell,
\end{equation}
and whose higher off-diagonal entries are zero. For $\ell=1$ its
only off-diagonal entry is $+x_1$.

\begin{lemma}[Last-column decoding]\label{lem:decode}
Suppose
$B(x_1,\ldots,x_\ell)\gamma=E$, where
$\gamma\in\UT_{\ell+1}(\Z)$ and
$E_{i,\ell+1} \in [-\frac{1}{2},\frac{1}{2})$ for $1\leq i\leq\ell$. Then
\begin{equation}\label{eq:decode}
 \mathsf L(x_1,\ldots,x_\ell)+\gamma_{1,\ell+1}=E_{1,\ell+1},
 \qquad
 \tnorm{\mathsf L(x_1,\ldots,x_\ell)}=|E_{1,\ell+1}|.
\end{equation}
\end{lemma}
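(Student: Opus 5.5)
We compute the last column of the product $B(x_1,\ldots,x_\ell)\gamma$ directly, working upward from the bottom row. The matrix $B=B(x_1,\ldots,x_\ell)$ is bidiagonal: its row $i$ has a $1$ in position $i$, the entry $-x_i$ in position $i+1$ for $i<\ell$, and $+x_\ell$ in position $\ell+1$ for $i=\ell$. Hence for $1\le i\le\ell$,
\[
 E_{i,\ell+1}=\gamma_{i,\ell+1}-x_i\gamma_{i+1,\ell+1}\quad(i<\ell),\qquad
 E_{\ell,\ell+1}=\gamma_{\ell,\ell+1}+x_\ell .
\]
These identities use only the last column of $\gamma$. Put $g_i=\gamma_{i,\ell+1}\in\Z$ and $e_i=E_{i,\ell+1}\in[-1/2,1/2)$.

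We now show by downward induction on $i$ that
\[
 g_i=-\nint{\mathsf L(x_i,\ldots,x_\ell)}\quad\text{and}\quad e_i=\cpart{\mathsf L(x_i,\ldots,x_\ell)} .
\]
For $i=\ell$ we have $e_\ell=x_\ell+g_\ell$ with $g_\ell\in\Z$ and $e_\ell\in[-1/2,1/2)$. The integer $g_\ell$ is therefore forced: $g_\ell=-\nint{x_\ell}$, by the tie convention $\nint{x}=\lfloor x+1/2\rfloor$, and so $e_\ell=\cpart{x_\ell}$. For the inductive step with $i<\ell$, the hypothesis gives $g_{i+1}=-\nint{\mathsf L(x_{i+1},\ldots,x_\ell)}$. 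Substituting into the identity above yields
\[
 e_i=g_i+x_i\nint{\mathsf L(x_{i+1},\ldots,x_\ell)}=g_i+\mathsf L(x_i,\ldots,x_\ell),
\]
by the recursive definition of $\mathsf L$. The same uniqueness argument (an integer shift landing in $[-1/2,1/2)$) forces $g_i=-\nint{\mathsf L(x_i,\ldots,x_\ell)}$ and $e_i=\cpart{\mathsf L(x_i,\ldots,x_\ell)}$.

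At $i=1$ this gives $\mathsf L(x_1,\ldots,x_\ell)+\gamma_{1,\ell+1}=E_{1,\ell+1}$, and $|E_{1,\ell+1}|=|\cpart{\mathsf L}|=\tnorm{\mathsf L(x_1,\ldots,x_\ell)}$. The case $\ell=1$ is just the base step. The only point needing care is the sign bookkeeping. The minus signs on the first $\ell-1$ superdiagonal entries are what make $-x_ig_{i+1}$ equal to $+x_i\nint{\cdot}$, while the plus sign on $x_\ell$ gives the correct base case. We also use that uniqueness of the nearest-integer shift on the half-open interval $[-1/2,1/2)$ matches the paper's rounding convention. No other obstacle arises.
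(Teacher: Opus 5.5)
Your proof is correct and follows the paper's argument. You read off the last column of $B\gamma$ from the bidiagonal structure, then run a downward induction in which uniqueness of the integer shift into $[-1/2,1/2)$ forces $\gamma_{i,\ell+1}=-\nint{\mathsf L(x_i,\ldots,x_\ell)}$; your $\mathsf L(x_i,\ldots,x_\ell)$ is exactly the paper's $H_i$.
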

\begin{proof}
Put $H_\ell=x_\ell$ and $H_i=x_i\nint{H_{i+1}}$ downward, so
$H_1=\mathsf L(x_1,\ldots,x_\ell)$. The last-column equation in row
$\ell$ is
\[
 E_{\ell,\ell+1}=x_\ell+\gamma_{\ell,\ell+1}.
\]
Since $E_{\ell,\ell+1} \in [-\frac{1}{2},\frac{1}{2})$, we have
$\gamma_{\ell,\ell+1}=-\nint{H_\ell}$ uniquely. For $i<\ell$, the
band structure gives
\[
 E_{i,\ell+1}=\gamma_{i,\ell+1}-x_i\gamma_{i+1,\ell+1}.
\]
If $\gamma_{i+1,\ell+1}=-\nint{H_{i+1}}$, this becomes
$E_{i,\ell+1}=\gamma_{i,\ell+1}+H_i$. Smallness again identifies the
integer correction exactly: $\gamma_{i,\ell+1}=-\nint{H_i}$.
Induction proves the top-row identity. Since its residual lies in
$[-1/2,1/2)$, distance to the integers is its absolute value.
\end{proof}

\begin{example}\label{ex:Reduction}
Let $P(n)=\alpha n^u\nint{\beta n^v\nint{\gamma n^w}}$ and observe that
\begingroup
\setlength{\arraycolsep}{3pt}
\renewcommand{\arraystretch}{1.25}
\[
\begin{aligned}
&
\begin{pmatrix}
 1 & -\alpha n^u & 0 & 0\\
 0 & 1 & -\beta n^v & 0\\
 0 & 0 & 1 & \gamma n^w\\
 0 & 0 & 0 & 1
\end{pmatrix}
\begin{pmatrix}
 1
 & -\nint{-\alpha n^u}
 & -\nint{\alpha n^u\nint{-\beta n^v}}
 & -\nint{\alpha n^u\nint{\beta n^v\nint{\gamma n^w}}}\\
 0
 & 1
 & -\nint{-\beta n^v}
 & -\nint{\beta n^v\nint{\gamma n^w}}\\
 0 & 0 & 1 & -\nint{\gamma n^w}\\
 0 & 0 & 0 & 1
\end{pmatrix}
\\[8pt]
&\quad =
\begin{pmatrix}
 1
 & \cpart{-\alpha n^u}
 & \cpart{\alpha n^u\nint{-\beta n^v}}
 & \cpart{\alpha n^u\nint{\beta n^v\nint{\gamma n^w}}}\\
 0
 & 1
 & \cpart{-\beta n^v}
 & \cpart{\beta n^v\nint{\gamma n^w}}\\
 0 & 0 & 1 & \cpart{\gamma n^w}\\
 0 & 0 & 0 & 1
\end{pmatrix}.
\end{aligned}
\]
\endgroup
The second matrix on the left belongs to \(\UT_4(\Z)\), and every
entry above the diagonal of the matrix on the right belongs to
\([-1/2,1/2)\). Thus the latter is the canonical representative
\(\rep(B(\alpha n^u,\beta n^v,\gamma n^w))\).
Its top-right entry is \(\cpart{P(n)}\), whose absolute value is
\(\tnorm{P(n)}\).
\end{example} 
\subsection{The proof}\label{ssec:TheProof}

\begin{proof}[Proof of Theorem~\ref{thm:omitted}]
First fix all the weight patterns and carry out the choices in
\eqref{eq:choice-order}. For one expression
$P(n)=\mathsf L(\alpha_1n^{w_1},\ldots,\alpha_\ell n^{w_\ell})$, set
\[
 g_P(t)=B(\alpha_1t^{w_1},\ldots,\alpha_\ell t^{w_\ell}).
\]
Let $g(t)\in G$ be the tuple of these matrices for all $m$ expressions.
Every nonzero off-diagonal coordinate of $g$ is homogeneous of its
assigned weight, and $g\in\Hpol$. Define
\begin{equation}\label{eq:integer-color}
 c(n)=\kappa\bigl(g(n/L)\bigr),\qquad n\in\N.
\end{equation}
The coefficients enter only through $g$; the coloring $\kappa$ and
integer $L$ depend on the patterns and $r$ alone.

Suppose $\AP{a}{d}{k}$ omits a color, and put
\[
 p(t)=g\bigl((a+dt)/L\bigr).
\]
The first $k$ integer samples of this member of $\Hpol$ omit that same
color.
Entrywise coefficient extraction, using \eqref{eq:scalar-extraction}, gives \begin{equation}\label{eq:tau-g} \tau(p)=g(d/L),\qquad \delta_L\tau(p)=g(d).
\end{equation}
Indeed, a superdiagonal entry $\pm\alpha_i((a+dt)/L)^{w_i}$ contributes $\pm\alpha_i(d/L)^{w_i}$, and all higher entries remain zero.

Proposition~\ref{prop:matrix-obstruction} therefore gives
\begin{equation}\label{eq:g-small}
 g(d)\gamma=E,\qquad \gamma\in\Gamma,\qquad
 |E_{ij}|\leq C_*k^{-w_{ij}},
\end{equation}
where
\[
 C_* =\max\left\{1,\max_{1\leq w\leq D}\frac34 b_w(2L)^w\right\}.
\]
For $k\geq\max\{4W+2,\lceil4C_*\rceil+1\}$ all off-diagonal errors
are less than $1/4$. Apply Lemma~\ref{lem:decode} in each block.
The top-right coordinate in the block for $P_i$ has weight $d_i$;
hence
\[
 \tnorm{P_i(d)}=|E_{\mathrm{top},i}|\leq C_*k^{-d_i}.
\]

Finally, for each $d\leq D$ there are only finitely many ordered lists
of positive integers with sum $d$. There are therefore finitely many
$m$-tuples of admissible weight patterns. Taking the maxima of the
constants and thresholds over these tuples gives $C(r,m,D)$ and
$k_0(r,m,D)$. For a given coefficient family, use the coloring
\eqref{eq:integer-color} for its pattern tuple.
\end{proof}
\section{Nilmanifolds and the question of Host-Kra-Maass}\label{sec:Nilmanifolds}

A \emph{nilmanifold} is a quotient space \(X=G/\Gamma\), where \(G\) is a finite-dimensional nilpotent Lie group and \(\Gamma\) is a discrete cocompact subgroup of \(G\).
Standard examples of nilmanifolds are the quotients \(\UT_n(\R)/\UT_n(\Z)\) and their finite products.
Given a $g \in G$, the map $T_g:X\rightarrow X$ given by $T_g(x\Gamma) = gx\Gamma$ is a \emph{niltranslation}, and the pair $(X,T_g)$ is a \emph{nilsystem}.
The nilsystem $(X,T_g)$ is \emph{minimal} if for every $x \in X$ the orbit of $x$ under $T_g$ is dense in $X$.
Given a point $x \in X$ and a nonempty open set $U \subseteq X$, we define $N(x,U) := \{n \in \mathbb{N}\ |\ T^nx \in U\}$.

Bergelson and Leibman \cite{BergelsonLeibman,LeibmanCanonical} showed that generalized polynomials and niltranslations are naturally related to each other.
The following manifestation of this relationship is a consequence of Theorems B and 4.2.11 of Huang, Shao, and Ye \cite{HSY}.

\begin{theorem}\label{thm:HSYThmA}
    If $(X,T_g)$ is a minimal nilsystem, $x \in X$, and $U$ is an open neighborhood of $x$, then $N(x,U)$ contains a nil-Bohr neighborhood of $0$.
\end{theorem}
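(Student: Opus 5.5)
The plan is to reduce Theorem~\ref{thm:HSYThmA} to the two results of Huang, Shao, and Ye that the paper cites, Theorems B and 4.2.11 of \cite{HSY}, rather than to reprove them. In \cite{HSY}, a set is called a nil$_d$-Bohr$_0$ set if it contains a set of the form $\{n : (T^n x_0)_{}\in V\}$ for some $d$-step nilsystem $(Y,S)$, some point $x_0$, and some neighborhood $V$ of $x_0$. Equivalently, by their Theorem B, it contains a set of the form $\{n:\tnorm{P_i(n)}<\eps,\ 1\le i\le k\}$ with $P_i$ generalized polynomials. Their Theorem 4.2.11 then reduces arbitrary generalized polynomials vanishing at $0$ to the \emph{special} generalized polynomials $\mathsf L(\alpha_1n^{w_1},\ldots,\alpha_sn^{w_s})$ used in Definition~\ref{def:NilBohr}.

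The proof will therefore run in three steps.

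\textbf{Step 1.} Observe that $N(x,U)$, with $U$ an open neighborhood of $x$ in a minimal nilsystem $(X,T_g)$, is by definition a return-time set of the type that \cite{HSY} calls a nil$_d$-Bohr$_0$ set, where $d$ is the nilpotency step of $G$. Minimality is not really needed for this step. It is only used to guarantee that we are in the setting of \cite{HSY}, where one may pass to the connected component and a Mal'cev basis.

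\textbf{Step 2.} Invoke Theorem B of \cite{HSY}. This states that the nil$_d$-Bohr$_0$ sets are exactly the sets containing some $\{n\in\N:\tnorm{P_i(n)}<\eps\ \forall i\}$, where the $P_i$ are generalized polynomials of degree at most $d$ with $P_i(0)=0$ (more precisely, elements of their class $\mathcal F_d$, which have this form). This yields generalized polynomials $P_1,\ldots,P_k$ and $\eps>0$ with $\{n:\tnorm{P_i(n)}<\eps\}\subseteq N(x,U)$.

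\textbf{Step 3.} Invoke Theorem 4.2.11 of \cite{HSY}. It says that for any such finite family and any $\eps>0$ there are special generalized polynomials $R_1,\ldots,R_{k'}$ and $\eps'>0$ such that
\[
\{n:\tnorm{R_j(n)}<\eps'\ \forall j\}\subseteq\{n:\tnorm{P_i(n)}<\eps\ \forall i\}.
\]
The right-hand set is contained in $N(x,U)$ by Step~2, so chaining the two inclusions gives the theorem.

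The main obstacle is bookkeeping between conventions, not mathematics. First, \cite{HSY} writes special generalized polynomials with the nearest-integer function and possibly different sign or rounding conventions from the $\nint{\cdot}$ used here, so one must check that the two classes generate the same filter of neighborhoods. The tie-breaking rule matters only on a set that can be absorbed by shrinking $\eps$, and a sign change of $\alpha_i$ is harmless. Second, $N(x,U)$ is a subset of $\N$ while \cite{HSY} may work in $\Z$, which requires only intersecting with $\N$. If one of these identifications turned out to be delicate, I would handle it as in the recursive argument of Lemma~\ref{lem:decode}: show that $\tnorm{\mathsf L(\ldots)}$ computed with either rounding convention agrees whenever all the intermediate fractional parts are small, which can be arranged by adding the intermediate expressions $\mathsf L(\alpha_i n^{w_i},\ldots,\alpha_s n^{w_s})$ to the family.
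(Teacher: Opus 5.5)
Your proposal is correct and is essentially the paper's argument. The paper gives no proof beyond attributing the statement to Theorems B and 4.2.11 of \cite{HSY}, which is exactly your chain: $N(x,U)$ is a Nil$_d$ Bohr$_0$-set, so it contains a neighborhood defined by generalized polynomials, and hence one defined by special generalized polynomials.

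Your extra bookkeeping is sound and makes explicit two points the paper leaves implicit: intersecting with $\N$, and reconciling the nearest-integer conventions. The second is handled by adding the tail expressions $\mathsf L(\alpha_in^{w_i},\ldots,\alpha_sn^{w_s})$ with $\eps<1/2$, so that every intermediate rounding is unambiguous.
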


Host, Kra, and Maass \cite{HKM} observed that if $(X,T_g)$ is a minimal nilsystem, $x \in X$, and $U$ is a nonempty open set for which $x \notin \overline{U}$, then $N(x,U)$ is not vdW-large.
They then ask in Question 7.8 whether there exists $r \ge 2$ for which $N(x,U)$ fails to be $r$-vdW-large.
Theorem \ref{thm:HSYThmA} tells us that $N(x,U)$ is not a set of nil-Bohr recurrence, so Theorem \ref{thm:2LargeIsNilrecurrent} tells us that $N(x,U)$ is not $2$-vdW-large, regardless of the choice of $U$ and $x \notin \overline{U}$.

\section{Application to Szemer\'edi numbers}\label{sec:extremal}

We now prove Corollary \ref{cor:extremal} concerning Szemer\'edi numbers.
We first require the following result, which bootstraps the constant from our main theorem by applying it to an appropriate auxiliary collection of generalized polynomials.

\begin{lemma}\label{lem:constant}
In Theorem~\ref{thm:omitted}, the constant $C = C(r,m,D)$ may be replaced by any prescribed $c > 0$, at the cost of increasing the threshold from $k_0(r,m,D)$ to $k_1 = k_1(r,m,D,c)$.
\end{lemma}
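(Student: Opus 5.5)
The plan is to apply Theorem~\ref{thm:omitted} not to $P_1,\ldots,P_m$ themselves but to dilated copies of them. The key observation is that for a positive integer $N$, $\tnorm{x}\le\eps$ implies nothing about $\tnorm{Nx}$; however $\tnorm{Nx}\le\tnorm{x}\cdot N$ fails to give upper bounds on $\tnorm{x}$. So instead we rescale the \emph{argument}: replace $k$ by $k/N$ in the conclusion by observing that a $k$-AP omitting a color contains a $\lfloor k/N\rfloor$-AP omitting a color, which is useless in the wrong direction. The right move is therefore to shrink coefficients. Given $P=\mathsf L(\alpha_1n^{w_1},\ldots,\alpha_sn^{w_s})$, I would consider the auxiliary polynomial $P'=\mathsf L(\alpha_1n^{w_1}/N,\alpha_2n^{w_2},\ldots,\alpha_sn^{w_s})$, so $P'(n)=P(n)/N$, with the same degree and depth. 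Theorem~\ref{thm:omitted} applied to $P_1',\ldots,P_m'$ (same $r,m,D$, hence same $C$ and $k_0$, since the constants are independent of the coefficients) yields a coloring such that any $k$-AP with $k\ge k_0$ omitting a color has $\tnorm{P_i(d)/N}\le Ck^{-d_i}$.

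The next step is to convert this into a bound on $\tnorm{P_i(d)}$. If $\tnorm{x/N}\le\beta$ with $\beta<1/(2N)$, then $x/N=z+\theta$ with $z\in\Z$, $|\theta|\le\beta$, so $x=Nz+N\theta$ and $\tnorm{x}\le N\beta$. This gives only $NCk^{-d_i}$, the wrong direction, so dividing is not enough by itself; the gain must come from the fact that $x/N$ is forced close to an \emph{integer}, i.e.\ $x$ is close to a multiple of $N$, which is information about $x$ modulo $N$, not smallness. Hence I would instead combine several dilates: apply the theorem simultaneously to the family $\{P_i(n)\cdot j/N : 1\le j\le N\}$ — which is not of the special form — so the cleaner route is to use the family $P_i^{(N)}$ obtained by multiplying the \emph{first} coefficient by $N$: $\mathsf L(N\alpha_1n^{w_1},\ldots)=NP_i(n)$, together with $P_i$ itself. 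Then the conclusion $\tnorm{P_i(d)}\le Ck^{-d_i}$ and $\tnorm{NP_i(d)}\le Ck^{-d_i}$ both hold. Writing $P_i(d)=z+\theta$ with $|\theta|\le Ck^{-d_i}<1/(2N)$ for $k$ large, we get $\tnorm{N\theta}=N|\theta|$, so $N|\theta|\le Ck^{-d_i}$, i.e.\ $\tnorm{P_i(d)}\le (C/N)k^{-d_i}$.

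So the concrete plan: choose an integer $N\ge C(r,2m,D)/c$, apply Theorem~\ref{thm:omitted} with $2m$ polynomials $P_1,\ldots,P_m,NP_1,\ldots,NP_m$ (each $NP_i$ is a special generalized polynomial of the same degree by scaling $\alpha_{i,1}$), and take $k_1\ge k_0(r,2m,D)$ large enough that $C(r,2m,D)k^{-1}<1/(2N)$. The replacement of $m$ by $2m$ in the constants is harmless since they depend only on $(r,m,D)$, and $N$ depends only on $(r,m,D,c)$. The only delicate step is the elementary unwrapping $\tnorm{N x}=N\tnorm{x}$ when $\tnorm{x}<1/(2N)$, which requires $k_1$ to be chosen after $N$; this ordering of choices is the point to check carefully.
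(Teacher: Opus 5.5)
Your final plan is correct and is essentially the paper's proof. You apply Theorem~\ref{thm:omitted} to the $2m$ polynomials $P_1,\ldots,P_m,NP_1,\ldots,NP_m$, scaling only the outermost coefficient, with the integer $N\geq C(r,2m,D)/c$ fixed before $k$. You then unwrap $\tnorm{NP_i(d)}=N\tnorm{P_i(d)}$ once $k$ is large enough that $N\tnorm{P_i(d)}<1/2$; the paper does the same with $q=\lceil C_0/c\rceil$ and $k\geq 4qC_0$. In a write-up, drop the exploratory first two paragraphs: they contain the abandoned coefficient-division idea and a false aside ($\tnorm{x}\leq\eps$ does imply $\tnorm{Nx}\leq N\eps$).
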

\begin{proof}
Let $C_0 = C(r,2m,D)$ and $k' = k_0(r,2m,D)$ be as in Theorem~\ref{thm:omitted}.
Choose the fixed integer $q=\lceil C_0/c\rceil$.
Apply Theorem \ref{thm:omitted} to $P_1,\ldots,P_m,qP_1,\ldots,qP_m$; each $qP_i$ is a special generalized polynomial of the same degree, since only its outermost coefficient changes.

On a progression omitting a color, for $k\geq k'$, both
$\tnorm{P_i(d)}$ and $\tnorm{qP_i(d)}$ are at most $C_0k^{-d_i}$.
Write $P_i(d)=z_i+u_i$ with $z_i\in\Z$ and
$|u_i|=\tnorm{P_i(d)}$.
If $k\geq4qC_0$, then $|qu_i|\leq1/4$,
so there is no wrap around the circle and
\[
 q|u_i|=\tnorm{qP_i(d)}\leq C_0k^{-d_i}.
\]
Thus $\tnorm{P_i(d)}\leq(C_0/q)k^{-d_i}\leq ck^{-d_i}$.
Both $q$ and the coloring were fixed independently of $k$, so it suffices to take $k_1(r,m,D,c) = \max\{\lceil 4qC_0 \rceil,k'\}$.
\end{proof}

With this lemma in hand, we can straightforwardly apply it and delete the least common color class to pass from a large restricted $k$-AP-free set to a large $k$-AP-free set.

\begin{proof}[Proof of Corollary~\ref{cor:extremal}]
Choose $r\geq2$ such that $r/(r-1)\leq1+\eps$, and take the coloring
from Lemma~\ref{lem:constant} for the prescribed $c$.
Fix $k$ above its threshold. Let $A\subseteq\Omega$ contain no
$k$-term progression with difference in $B_k(c)$. Delete a color class
of smallest cardinality in $A$, leaving a set $A'$ with
\[
 |A'|\geq\left(1-\frac1r\right)|A|.
\]
Every progression in $A'$ omits the deleted color, so a $k$-term
progression there would have difference in $B_k(c)$, a contradiction.
Thus $A'$ is $k$-AP-free and
\[
 |A|\leq\frac r{r-1}r_k(\Omega)\leq(1+\eps)r_k(\Omega).
\]
Taking the maximum over $A$ proves the upper bound. The lower bound
holds because forbidding every $k$-term progression is a stronger
restriction than forbidding just the ones with difference in $B_k(c)$.
\end{proof}
\section{Further questions}\label{sec:questions}

Theorem~\ref{thm:deltaSzLargeIsNilrecurrent} is independent of which fixed density $\delta\in(0,1)$ is assumed, which leads to the following natural density variation of the 2-large is large conjecture.

\begin{question}\label{qu:density}
If $S\subseteq\N$ is $\delta$-Szemer\'edi-large for some $\delta\in(0,1)$, must it be $\eta$-Szemer\'edi-large for every $\eta\in(0,1)$?
\end{question}

Given that Theorem \ref{thm:omitted} implies both Theorems \ref{thm:2LargeIsNilrecurrent} and \ref{thm:deltaSzLargeIsNilrecurrent} as corollaries, it is natural to ask if there is a polychromatic result that would simultaneously solve the 2-large is large conjecture and answer Question \ref{qu:density}.
This naturally leads us to the following question about whether obstacles to finding long arithmetic progressions inside a set $A$ with prescribed common differences can be captured by a coloring.

\begin{question}\label{qu:polychromatic-transfer}
Let \(A\subseteq\N\) have positive upper density, and let $r \ge 2$.
Do there exist arbitrarily large \(K\geq2\) for which there is a coloring
\(c:\N\to\{1,\ldots,r\}\) such that, for every \(a,d\in\N\), if $\{a,a+d,\cdots,a+Kd\}$ omits a color, then $A\cap(A-d)\cap\cdots\cap (A-Kd) \neq \emptyset$?
\end{question}

A positive answer would imply both the 2-large-is-large conjecture and a positive answer to Question~\ref{qu:density}.
Indeed, the case \(r=2\) would show that every \(2\)-vdW-large set is Szemer\'edi-large (hence vdW-large), while taking \(r\) with
\(1-1/r\geq\delta\) and deleting a suitable color class would
show the same for every \(\delta\)-Szemer\'edi-large set.

\bibliographystyle{amsplain}
\bibliography{vdw_largeness_nilrecurrence}
\end{document}